\documentclass[12pt]{amsart}

\usepackage{amssymb}
\usepackage{bm}
\usepackage{graphicx}
\usepackage{psfrag}
\usepackage[dvipsnames]{xcolor}
\usepackage{enumerate}
\usepackage{multirow}
\usepackage{url}
\usepackage{hyperref}

\usepackage{subcaption}
\usepackage{comment}

\usepackage{algpseudocode}

\usepackage{mathtools}

\usepackage{xy}
\input xy
\xyoption{all}

\usepackage{tikz}

\newcommand{\excise}[1]{}

\newtheorem{thm}{Theorem}[section]
\newtheorem{lemma}[thm]{Lemma}

\newtheorem{cor}[thm]{Corollary}
\newtheorem{prop}[thm]{Proposition}

\theoremstyle{definition}

\newtheorem{example}[thm]{Example}
\newtheorem{remark}[thm]{Remark}
\newtheorem{defn}[thm]{Definition}

\numberwithin{equation}{section}

\newcommand\RR{\mathbb{R}}

\newcommand\ZZ{\mathbb{Z}}

\def\wt#1{{\widetilde {#1}}}
\newcommand\set[1]{\left\{#1\right\}}

\DeclareMathOperator\lcm{lcm} 

\newcommand{\CEND}{C_\textnormal{END}}

\newcommand\nd{\mathrm{nd}}
\newcommand\gnd{\mathrm{gnd}}

\begin{document}

\mbox{}
\title{Eventually nondecreasing quasi-polynomials}

\author[B.~Braun]{Benjamin Braun}
\address{Mathematics Department\\University of Kentucky\\Lexington, KY 40506}
\email{benjamin.braun@uky.edu}

\author[C.~O'Neill]{Christopher O'Neill}
\address{Mathematics Department\\San Diego State University\\San Diego, CA 92182}
\email{cdoneill@sdsu.edu}

\author[A.~Park]{Antwon Park}
\address{Mathematics Department\\University of Kentucky\\Lexington, KY 40506}
\email{antwon.park@uky.edu}

\date{21 July 2026}

\thanks{BB and AP were partially supported by NSF award DMS-2450299.}

\begin{abstract}
Quasi-polynomials are ubiquitous in combinatorics and algebra, as they arise in a variety of enumeration problems.
Because quasi-polynomials consist of constituent polynomials, their behavior is more subtle than for a single polynomial.
In particular, unlike for a polynomial, it is possible for a quasi-polynomial defined on the positive integers to have infinitely many points at which it is decreasing.
In this work, we characterize quasi-polynomials of degree $d$ and period dividing $p$ that are eventually nondecreasing, i.e., that have only finitely many values at which they decrease.
We then give a detailed analysis of the space of eventually nondecreasing quasi-polynomials with fixed degree $d$ and period dividing a fixed $p$ such that the $h$-vector of the quasi-polynomial is nonnegative.
Using this analysis, we determine the rate of growth of the number of such quasi-polynomials as a function of the sum of the $h$-vector entries for the $0$-th constituent polynomial.
\end{abstract}

\maketitle


\section{Introduction}
\label{sec:intro}

Quasi-polynomials play a key role in combinatorics and algebra, as they frequently arise as the solution to enumeration problems related to positive integer sequences.
For example, quasi-polynomials are the integer-point enumerators for dilates of rational polytopes~\cite{ccd}.
They also arise as Hilbert series of graded algebras that are not standard graded~\cite{brunsherzog,numerical}.
Quasi-polynomials also play a major role in the study of Presburger arithmetic~\cite{presburgerarith,woodsunreasonable}.

Because quasi-polynomials consist of a finite set of constituent polynomials, their behavior is more subtle than for a single polynomial.
In particular, unlike for a polynomial, it is possible for a quasi-polynomial defined on the positive integers to have infinitely many points at which it is decreasing.	
The motivation for this work is to classify and study those quasi-polynomials that, like a polynomial with a positive leading coefficient, are eventually nondecreasing.

For a quasi-polynomial $f(t)$ defined on $\ZZ_{\geq 0}$, we use the standard encoding of $f$ in a rational generating function of the form
\[
\sum_{n \ge 0} f(n) z^n
:= \frac{h_0 + h_1z + h_2z^2 + \cdots +h_{p(d+1)}z^{p(d+1)}}{(1-z^p)^{d+1}}\, .
\]
Throughout combinatorics and algebra, especially for enumeration problems involving graded Cohen-Macaulay algebras, the vector $(h_0,\ldots,h_{p(d+1)})$ is a major focus of study.
Thus, we frame our investigations as the study of $h$-vectors for quasi-polynomials, rather than as the study of the coefficients of the individual constituent polynomials.
With this point of view, it becomes natural to study quasi-polynomials for which the $h$-vector satisfies special properties, such as nonnegativity.

Our main contributions in this work are as follows.
\begin{itemize}
\item We introduce in Section~\ref{sec:notation} our primary objects of study, the $N$-th nondecreasing cone $C_N$ and the eventually nondecreasing cone $\CEND$, each of which is defined for quasipolynomials of a fixed degree $d$ with period dividing a fixed $p$.
\item We characterize in Section~\ref{sec:endcone} the set of $h$-vectors that belong to $\CEND$.
We also prove that the topological closure of $\CEND$, denoted $\overline{\CEND}$, is a polyhedral cone for every fixed $d$ and $p$.
\item In Section \ref{sec:generalpolyhedrality}, we prove that $C_N$ is polyhedral if and only if $d = 1$ or $d = 2$.
\item Motivated by properties of quasi-polynomials arising as Ehrhart series and Hilbert series of Cohen-Macaulay algebras, in Section \ref{sec:nondecreasingend} we consider the cone of nonnegative vectors in $\overline{\CEND}$.
We classify the ray generators of the nonnegative part of $\overline{\CEND}$, and in Section~\ref{sec:sufficiency} we establish sufficient conditions to detect when one of those vertices is in $\CEND$.
\item In Section~\ref{sec:growth}, we study the enumeration of quasipolynomials with nonnegative integer $h$-vectors by an invariant that we call the volume (motivated by Ehrhart theory).
We prove that for fixed $d$ and $p$, as a function of volume the number of eventually nondecreasing quasipolynomials is itself a quasipolynomial.
\end{itemize}

Some of the proofs of our main results are long and technical, and for those proofs we include them in Section~\ref{sec:proofs}.

\section{Notation and setup}
\label{sec:notation}

Throughout this article, fix $d, p \in \ZZ_{\ge 0}$ with $p \ge 1$.  
For $h = (h_0, h_1, h_2, \ldots)\in \RR^{p(d+1)}$, let 
\[
h_{r,j} := h_{jp + r}
\qquad \text{for} \qquad
0 \le r < p
\qquad \text{and} \qquad
0 \le j \le d \, ,
\]
and let $h_{r,*} := (h_{r,0}, h_{r,1}, \cdots) \in \RR^{d+1}$ for each $r$.
Define $h(z) := h_0 + h_1z + h_2z^2 + \cdots$, and define $L_h:\ZZ_{\ge 0} \to \RR$ via
\[
\sum_{n \ge 0} L_h(n) z^n
:= \frac{h(z)}{(1-z^p)^{d+1}}\, .
\]
By expanding $1/(1-z^p)^{d+1}$ as a power series, it is straightforward to verify that for each $0\leq r<p$, we have 
\begin{equation}
L_h(tp + r) = \sum_{j = 0}^d h_{r,j} \binom{t+d-j}{d}= \sum_{j = 0}^d  \frac{h_{r,j}}{d!}\prod_{k=0}^{d-1}(t+d-j-k)\, . \label{eq:qpdef} 
\end{equation}
Using the right-hand side of~\eqref{eq:qpdef}, we see that $L_h(tp+r)$ is a polynomial in $t$ of degree at most $d$.
Thus, for each choice of $h\in \RR^{p(d+1)}$,  the function $L_h$ is a quasi-polynomial of degree at most $d$ and period dividing $p$.

Note that when $t$ and $r$ are both fixed, and when the values $h_{r,j}$ are treated as variables, then $L_h(tp+r)$ is a linear expression in $h_{r,*}$.
Thus, the condition 
\[
L_h(n+1) \ge L_h(n) \, ,
\]
which captures that $L_h$ is nondecreasing at $n$, yields a linear inequality on the entries of the vector $h$.
As such, in order for the function $L_h$ to be nondecreasing at all but finitely many input values, $h$ must satisfy the above inequality for all but finitely many~$n$ (note that this is an infinite collection of linear inequalities on the entries of $h$).  

\begin{example}
\label{ex:hlinear}
For $d=3$, $p=4$, and $r=1$, we have
\begin{align*}
L_h(21) & = L_h(5p+r)
\\
&=h_{1,0}\binom{5+3-0}{3}+h_{1,1}\binom{5+3-1}{3}+h_{1,2}\binom{5+3-2}{3}+h_{1,3}\binom{5+3-3}{3} \\
&= 56h_{1,0}+35h_{1,1}+20h_{1,2}+10h_{1,3}\, . 
\end{align*}
Thus, $L_h(21)\geq L_h(20)$ yields the inequality
\[
56h_{1,0}+35h_{1,1}+20h_{1,2}+10h_{1,3}\geq 56h_{0,0}+35h_{0,1}+20h_{0,2}+10h_{0,3}\, ,
\]
while  $L_h(24)\geq L_h(23)$ yields the inequality
\[
84h_{0,0}+56h_{0,1}+35h_{0,2}+20h_{0,3}\geq 56h_{3,0}+35h_{3,1}+20h_{3,2}+10h_{3,3}\, .
\]
\end{example}

This motivates the following definition.  
Recall that a \emph{(convex) cone} is a subset $C \subseteq \RR^d$ that is closed under non-negative linear combinations.

\begin{defn}\label{d:nondecreasingcone}
For each $N \in \ZZ_{\ge 0}$, the \emph{$N$-th nondecreasing cone} is given by
\[
C_N := \{h \in \RR^{p(d+1)} : L_h(n+1) \ge L_h(n) \text{ for all } n \ge N\} \, .
\]
Note that $C_0 \subseteq C_1 \subseteq C_2 \subseteq \cdots$.
We define the \emph{always nondecreasing cone} to be $C_\textnormal{AND} := C_0$ and the \emph{eventually nondecreasing cone} to be $C_\textnormal{END} = \bigcup_{N \ge 0} C_N$.
We say a quasi-polynomial $L_h$, or equivalently the vector $h$, is \emph{always nondecreasing} if $h\in C_0$.
We say $L_h$ and $h$ are \emph{eventually nondecreasing} if $h \in C_\textnormal{END}$.
\end{defn}

The following lemma verifies the sets $C_N$ and $C_\textnormal{END}$ are actually cones.

\begin{lemma}
The set $C_N$ is a cone for each $N\geq 0$, as is $C_\textnormal{END}$.  
\end{lemma}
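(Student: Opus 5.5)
The plan is to use that $h \mapsto L_h(n)$ is linear for each fixed $n$, and then handle $C_\textnormal{END}$ via the nested chain $C_0 \subseteq C_1 \subseteq \cdots$.

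\emph{The cones $C_N$.} First I will note that for each fixed $n\ge 0$ the map $h\mapsto L_h(n)$ is a linear functional on $\RR^{p(d+1)}$. Indeed, writing $n=tp+r$ with $0\le r<p$, equation~\eqref{eq:qpdef} gives $L_h(n)=\sum_{j=0}^d h_{r,j}\binom{t+d-j}{d}$, which is linear in the entries of $h$. (Alternatively, linearity is immediate from the definition of $L_h$ as the coefficients of $h(z)/(1-z^p)^{d+1}$, since $h\mapsto h(z)$ is linear.) Hence $\ell_n(h):=L_h(n+1)-L_h(n)$ is also linear. Each set $H_n:=\{h:\ell_n(h)\ge 0\}$ is then a closed half-space through the origin, and in particular a cone: if $h,h'\in H_n$ and $\lambda,\mu\ge 0$, then $\ell_n(\lambda h+\mu h')=\lambda\ell_n(h)+\mu\ell_n(h')\ge 0$. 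Since $C_N=\bigcap_{n\ge N}H_n$, and an arbitrary intersection of sets closed under nonnegative linear combinations is again closed under them, $C_N$ is a cone. It is nonempty because it contains $0$.

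\emph{The cone $C_\textnormal{END}$.} A union of cones is not a cone in general, so here I will use the nesting $C_0\subseteq C_1\subseteq\cdots$ noted after Definition~\ref{d:nondecreasingcone}. This nesting follows because the defining conditions of $C_{N+1}$ form a subset of those of $C_N$. Take $h\in C_N$, $h'\in C_{N'}$ and $\lambda,\mu\ge0$, and set $M=\max(N,N')$. Then $h,h'\in C_M$, so $\lambda h+\mu h'\in C_M\subseteq C_\textnormal{END}$ by the first part. Thus $C_\textnormal{END}$ is closed under nonnegative linear combinations.

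There is no real obstacle here. The only point needing care is the union step, where the directedness of the chain is essential.
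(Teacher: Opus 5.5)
Your proposal is correct and follows the paper's approach: each condition $L_h(n+1)\ge L_h(n)$ is a linear inequality in $h$, so $C_N$ is an intersection of half-spaces through the origin, and $C_\textnormal{END}$ is a cone because it is a nested union of cones. You also spell out the directedness step (passing to $C_{\max(N,N')}$), which the paper leaves implicit.
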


\begin{proof}
For each $s \ge 0$, as stated previously one may view
$L_h(n+1) \ge L_h(n)$
as a linear inequality on the entries of $h$.  
As such, each $C_N$ equals the solution set of a (countably infinite) system of linear inequalities, and thus is a cone.  Additionally, since $C_0 \subseteq C_1 \subseteq C_2 \subseteq \cdots$, their union $C_\textnormal{END}$ is also a cone.  
\end{proof}

\section{Characterizing eventually nondecreasing quasi-polynomials}%
\label{sec:endcone}

Our goal in this section is to characterize the vectors $h \in C_\textnormal{END}$.
The key observation is the following:\ since $L_h(n)$ is a quasi-polynomial in $n$ of degree at most $d$ and period dividing $p$, the same can be said for $L_h(n+1) - L_h(n)$, and the former is eventually nondecreasing if and only if the latter is eventually nonnegative.  Since a quasi-polynomial is eventually nonnegative if and only if each nonzero constituent has positive leading coefficient, we may determine whether $L_h(n)$ is eventually non-decreasing by computing the leading coefficients of the constituents of $L_h(n+1) - L_h(n)$.  Each coefficient of a given constituent of $L_h(n+1) - L_h(n)$ is a linear function of the entries of $h$, but since the leading coefficient is also dependent on the degree of its constituent, the resulting characterization of $C_\textnormal{END}$ involves a sequence of checks.  Note also that the coefficients of $L_h(n)$, when expressed in terms of the entries of $h$, depend on Stirling numbers, which notoriously do not possess a closed form; thankfully, after some modest algebra, this complexity can be avoided when checking if $h$ is eventually nondecreasing.

\begin{thm}\label{t:endh}
We have $h \in C_\textnormal{END}$ if and only if the following hold:
\begin{enumerate}[1.]
\item 
for each $r \in [0, p-2]$, either for each $\ell \in [0, d]$ the quantity
$$
A^{(\ell)}_r(h) := \sum_{j=0}^d (-j)^\ell h_{r+1,j} - \sum_{j=0}^d (-j)^\ell h_{r,j}
$$
vanishes, or the smallest $\ell$ for which $A^{(\ell)}_r(h) \ne 0$ has $A^{(\ell)}_r(h) > 0$; and
\item 
either for each  $\ell \in [0,d]$ the quantity
$$
B^{(\ell)}(h) := \sum_{j=0}^d (-j)^\ell h_{0,j}
- \sum_{j=0}^d (-j-1)^\ell h_{p-1,j}
$$
vanishes, or the smallest $\ell$ for which $B^{(\ell)}(h) \ne 0$ has $B^{(\ell)}(h) > 0$.  
\end{enumerate}
\end{thm}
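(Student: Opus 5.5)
The plan is to reduce eventual nondecreasingness to a sign condition on finitely many polynomials in $t$, and then read that sign condition off the coefficients of the polynomials written in a convenient basis. Split $n$ by residue mod $p$. For $n = tp + r$ with $r \in [0,p-2]$, we have $n+1 = tp + (r+1)$, so by \eqref{eq:qpdef}
\[
D_r(t) := L_h(tp+r+1) - L_h(tp+r) = \sum_{j=0}^d (h_{r+1,j}-h_{r,j}) \binom{t+d-j}{d}.
\]
For $r = p-1$ we have $n+1 = (t+1)p$, so
\[
D_{p-1}(t) := \sum_{j=0}^d h_{0,j}\binom{t+1+d-j}{d} - \sum_{j=0}^d h_{p-1,j}\binom{t+d-j}{d}.
\]
Then $h \in \CEND$ if and only if each of the $p$ polynomials $D_r(t)$ is nonnegative for all sufficiently large integers $t$. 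A real polynomial has this property exactly when it is identically zero or its leading coefficient is positive.

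The key step is to replace the monomial basis, whose coefficients involve Stirling numbers, by a basis in which the coefficients are exactly the quantities $A_r^{(\ell)}$ and $B^{(\ell)}$. Shift the variable: set $s = t + d$ in the first family and $s = t+d+1$ in $D_{p-1}$. Then $\binom{t+d-j}{d} = \binom{s-j}{d}$, and in $D_{p-1}$ the term $\binom{t+d-j}{d}$ becomes $\binom{s-1-j}{d}$. Expand each summand by Taylor's formula in the shift $c = -j$ (respectively $c = -j-1$):
\[
\binom{s+c}{d} = \sum_{\ell=0}^d \frac{c^\ell}{\ell!}\, q^{(\ell)}(s), \qquad q(s) := \binom{s}{d}.
\]
This gives
\[
D_r = \sum_{\ell} \frac{A_r^{(\ell)}(h)}{\ell!}\, q^{(\ell)}(s), \qquad D_{p-1} = \sum_{\ell} \frac{B^{(\ell)}(h)}{\ell!}\, q^{(\ell)}(s).
\]
Since $q$ has degree exactly $d$, its derivative $q^{(\ell)}$ has degree exactly $d-\ell$ and positive leading coefficient $d!/((d-\ell)!\,d!) > 0$.

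The polynomials $q^{(0)}, \dots, q^{(d)}$ therefore form a triangular basis with positive leading coefficients. Consequently $D_r \equiv 0$ if and only if all $A_r^{(\ell)}$ vanish. Otherwise the degree of $D_r$ is $d-\ell_0$, where $\ell_0$ is the smallest $\ell$ with $A_r^{(\ell)} \neq 0$, and its leading coefficient is $A_r^{(\ell_0)}$ times a positive constant. The same holds for $D_{p-1}$ with the $B^{(\ell)}$. Since a shift of the variable does not change eventual sign, this yields exactly conditions 1 and 2 of Theorem~\ref{t:endh}.

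The only point needing care is the Taylor-expansion bookkeeping, in particular matching the extra shift $-j-1$ in $B^{(\ell)}$ with the $h_{p-1,j}$ terms. The rest is routine.
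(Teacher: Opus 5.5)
Your proof is correct, and it reaches the same leading-coefficient criterion by a cleaner route than the paper's. Both arguments reduce to the $p$ constituent difference polynomials. The paper, however, works in the monomial basis. It proves the shift identity $e_{d,k}(1-j,\ldots,d-j)=\sum_{\ell}(-j)^{\ell}\binom{d-k+\ell}{\ell}E_{d,k-\ell}$ for elementary symmetric polynomials, and uses it to write each coefficient of $t^{d-k}$ in $L_h(tp+r)$ as a combination of the moments $\sum_j(-j)^\ell h_{r,j}$. It then reads off that, once the lower $A^{(\ell)}_r$ vanish, the coefficient of $t^{d-m}$ in the difference is $\frac{1}{d!}\binom{d}{m}A^{(m)}_r(h)$. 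For the wrap-around class it must further expand $(t+1)^{d-s}$ and $e_{d,k}(2,\ldots,d+1)$, which produces a triple sum whose lowest surviving term has the sign of $B^{(m)}(h)$. Your triangular basis $q^{(\ell)}(s)$ with $q(s)=\binom{s}{d}$ is the same Taylor expansion in the shift that the symmetric-function identity encodes coefficient by coefficient. The difference is that in your basis the coordinates are exactly $A^{(\ell)}_r(h)/\ell!$ and $B^{(\ell)}(h)/\ell!$, so the whole expansion, not only its leading term, is visible at once. Your choice $s=t+d+1$ also absorbs the $(t+1)$ shift, so $r=p-1$ needs no separate computation. Your leading coefficient $A^{(m)}_r(h)/(m!\,(d-m)!)$ agrees with the paper's $\frac{1}{d!}\binom{d}{m}A^{(m)}_r(h)$. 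In exchange, the paper's route gives explicit monomial coefficients of the constituents in terms of the numbers $E_{d,k}$.
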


\begin{proof}
Let $e_{d,k}(x_1,\ldots,x_d)$ denote the degree~$k$ elementary symmetric polynomial on $d$~variables and let $E_{d,k} := e_{d,k}(1,2,\ldots,d)$. 
We first observe that the following equality holds for any $0\leq j\leq d$, as it will be used throughout the proof:
\begin{align*}
e_{d,k}(1-j,\ldots,d-j) 
&= \sum_{\substack{A\subseteq[d]\\ |A|=k}} \, \prod_{\alpha\in A}(-j+\alpha)
= \sum_{\substack{A\subseteq[d] \\ |A|=k}} \, \sum_{\ell=0}^k(-j)^{\ell}\sum_{\substack{B\subseteq A \\ |B|=k-\ell}} \, \prod_{b\in B}b
\\
&= \sum_{\ell=0}^k (-j)^{\ell} \!\! \sum_{\substack{B \subseteq [d] \\ |B|=k-\ell}} \, \sum_{\substack{B\subseteq A \\ |A|=k}} \, \prod_{b\in B}b
= \sum_{\ell=0}^k (-j)^{\ell} \binom{d-k+\ell}{\ell} \sum_{\substack{B\subseteq [d] \\ |B|=k-\ell}} \, \prod_{b\in B}b
\\
&= \sum_{\ell=0}^k(-j)^{\ell}\binom{d-k+\ell}{\ell}E_{d,k-\ell}.
\end{align*}
Suppose $r \in [0, p-1]$.  
Observe that
\begin{align*}
L_h(tp + r)
&= \sum_{j = 0}^d h_{r,j} \binom{t+d-j}{d}
= \frac{1}{d!} \sum_{j = 0}^d h_{r,j} \sum_{k = 0}^d e_{d,k}(1-j, \ldots, d-j) t^{d-k}
\\
&= \frac{1}{d!} \sum_{j = 0}^d h_{r,j} \sum_{k = 0}^d \bigg( \sum_{\ell = 0}^k \binom{d-k+\ell}{\ell} E_{d,k-\ell} \ (-j)^\ell \bigg) t^{d-k}
\\
&= \frac{1}{d!} \sum_{k = 0}^d \sum_{\ell = 0}^k \binom{d-k+\ell}{\ell} E_{d,k-\ell} \bigg(\sum_{j = 0}^d h_{r,j} (-j)^\ell \bigg) t^{d-k}.
\end{align*}
We first consider the case where $r \ne p-1$.
Suppose that $A^{(\ell)}_r(h)=0$ for all $\ell<m$, or equivalently that
$$
\sum_{j=0}^d (-j)^\ell h_{r+1,j}
= \sum_{j=0}^d (-j)^\ell h_{r,j}
\qquad \text{for all} \qquad
\ell < m.
$$
Therefore,
\begin{align*} 
L_h(tp + r + 1) - L_h(tp + r)
&= \frac{1}{d!} \sum_{k = 0}^d \sum_{\ell = 0}^k \binom{d-k+\ell}{\ell} E_{d,k-\ell} \bigg(\sum_{j = 0}^d (h_{r+1,j} - h_{r,j}) (-j)^\ell \bigg) t^{d-k}
\end{align*}
has degree at most $d-m$ since the innermost sum vanishes for $\ell < m$.
Further, the coefficient of $t^{d-m}$ equals
\begin{equation}\label{eq:A_leading_coeff}
	\frac{1}{d!} \binom{d}{m} \sum_{j = 0}^d (h_{r+1,j} - h_{r,j}) (-j)^m
	= \frac{1}{d!} \binom{d}{m} A^{(m)}_r(h).
\end{equation}
Therefore, the $r$-th constituent of $L_h(n+1) - L_h(n)$ is eventually nonnegative if and only if $A^{(m)}_r(h)>0$.

We next consider the case $r=p-1$, which will be proved similarly.
Suppose  $B^{(\ell)}(h)=0$ for all $\ell<m$, or equivalently, that
$$
\sum_{j=0}^d (-j)^\ell h_{0,j}
= \sum_{j=0}^d (-j-1)^\ell h_{p-1,j}
\qquad \text{for all} \qquad
\ell < m.
$$
Setting $E_{d,k}' = e_{d,k}(2, \cdots, d+1)$ and $e_{d,k} = 0$ unless $0 \le k \le d$, observe that
\begin{align*}
L_h(tp + p - 1)
&= \frac{1}{d!} \sum_{j = 0}^d h_{p-1,j} \sum_{k = 0}^d e_{d,k}(1-j, \cdots, d-j) t^{d-k}
\\
&= \frac{1}{d!} \sum_{k = 0}^d \sum_{\ell = 0}^k \binom{d-k+\ell}{\ell} E_{d,k-\ell}' \sum_{j = 0}^d h_{p-1,j} (-j-1)^\ell t^{d-k}
\\
&= \frac{1}{d!} \sum_{k = 0}^d \sum_{\ell = 0}^k \sum_{n = 0}^{k-\ell} \binom{d-k+\ell+n}{n} \binom{d-k+\ell}{\ell} E_{d,k-\ell-n} \sum_{j = 0}^d h_{p-1,j} (-j-1)^\ell t^{d-k}.
\end{align*}
Additionally,
\begin{align*}
L_h(tp + p)
&= L_h((t+1)p)
\\
&= \frac{1}{d!} \sum_{s = 0}^d \sum_{\ell = 0}^s \binom{d-s+\ell}{\ell} E_{d,s-\ell} \sum_{j = 0}^d h_{0,j} (-j)^\ell (t+1)^{d-s}
\\
&= \frac{1}{d!} \sum_{s = 0}^d \sum_{\ell = 0}^s \binom{d-s+\ell}{\ell} E_{d,s-\ell} \sum_{j = 0}^d h_{0,j} (-j)^\ell \sum_{n = 0}^{d-s} \binom{d-s}{n} t^{d-s-n}
\\
&= \frac{1}{d!} \sum_{k = 0}^d \sum_{n = 0}^k \sum_{\ell = 0}^{k-n} \binom{d-k+n+\ell}{\ell} \binom{d-k+n}{n} E_{d,k-\ell-n} \sum_{j = 0}^d h_{0,j} (-j)^\ell t^{d-k}
\end{align*}
where the substitution $k = s + n$ is used in the last step.  In the final expressions for $L_h(tp + p)$ and $L_h(tp + p - 1)$, all 4 sums may be extended to an upper limit of $d$ without introducing any nonzero summands, so the coefficient of $t^{d-k}$ in the expression for 
\begin{align*}
L_h(tp + p) - L_h(tp + p - 1)
\end{align*}
is then given by
\begin{align*}
\frac{1}{d!} \sum_{k = 0}^d \sum_{n = 0}^d \sum_{\ell = 0}^d \binom{d-k+n+\ell}{n} \binom{d-k+\ell}{\ell} E_{d,k-\ell-n} \bigg(\sum_{j = 0}^d h_{0,j}(-j)^\ell - h_{p-1,j}(-j-1)^\ell \bigg).
\end{align*}
As before, for $k < m$ the coefficient of $t^{d-k}$ is zero, and the coefficient of $t^{d-m}$ has identical sign to $B^{(m)}(h)$. 
Therefore, the remaining constituent of $L_h(n+1) - L_h(n)$ is eventually nonnegative if and only if $B^{(m)}(h) > 0$.
\end{proof}

\begin{cor}\label{cor:disjointend}
The cone $\CEND$ is a disjoint union of the relative interiors of finitely many polyhedral cones.  
\end{cor}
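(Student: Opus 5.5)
Theorem~\ref{t:endh} describes $\CEND$ as the intersection of $p$ independent conditions, one for each $r\in[0,p-1]$. Each condition is a lexicographic sign condition on a finite list of linear functionals. For $r\in[0,p-2]$ the list is $A^{(0)}_r,\dots,A^{(d)}_r$, and for $r=p-1$ it is $B^{(0)},\dots,B^{(d)}$. The plan is to split $\RR^{p(d+1)}$ according to which pattern of vanishing and positivity each list exhibits, and to show that each resulting piece is the relative interior of a polyhedral cone.

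First, fix $r$ and write $\phi_0,\dots,\phi_d$ for the relevant functionals. The condition of Theorem~\ref{t:endh} holds exactly when $h$ lies in one of the following $d+2$ sets:
\[
S_m := \{h : \phi_0(h)=\cdots=\phi_{m-1}(h)=0,\ \phi_m(h)>0\}\quad (0\le m\le d), \qquad S_{d+1} := \{h : \phi_0(h)=\cdots=\phi_d(h)=0\}.
\]
These sets are pairwise disjoint, because the index of the first nonvanishing functional is determined by $h$. Taking one choice $m_r$ for each $r$, we get
\[
\CEND = \bigsqcup_{(m_0,\dots,m_{p-1})} \; \bigcap_{r} S^{(r)}_{m_r}.
\]
This union is disjoint because the tuple $(m_r)$ is recovered from $h$, and there are finitely many tuples, at most $(d+2)^p$. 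We discard any piece that is empty.

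Second, each nonempty piece $P$ has the form
\[
P = \{h : \lambda_i(h)=0 \text{ for } i\in I,\ \mu_k(h)>0 \text{ for } k\in K\},
\]
where $I$ and $K$ are finite, and where $|K|\le p$ because each $r$ contributes at most one strict inequality. Let $\overline{P}$ be the same set with each strict inequality $\mu_k>0$ weakened to $\mu_k\ge 0$. Then $\overline{P}$ is a polyhedral cone, and we will show that $P=\operatorname{relint}\overline{P}$.

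This identification is the main step requiring care. The cone $\overline{P}$ lies in the subspace $V=\bigcap_{i\in I}\ker\lambda_i$. Nonemptiness of $P$ gives a point $h_0\in V$ with $\mu_k(h_0)>0$ for all $k\in K$. Consequently no $\mu_k$ vanishes identically on $V$, and $\overline{P}$ is full-dimensional in $V$, so its affine hull is $V$.

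With this in hand we prove both inclusions. For $P\subseteq\operatorname{relint}\overline{P}$: the set $P$ is open in $V$, since it is cut out of $V$ by finitely many strict inequalities, and it is contained in $\overline{P}$. For $\operatorname{relint}\overline{P}\subseteq P$: suppose $h\in\overline{P}$ with $\mu_k(h)=0$ for some $k$. Then $h-\epsilon h_0$ lies in $V$ but satisfies $\mu_k(h-\epsilon h_0)<0$ for every $\epsilon>0$, so $h$ is not in the relative interior. Hence $P=\operatorname{relint}\overline{P}$, which completes the proof.
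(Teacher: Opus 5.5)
Your proof is correct and follows the same strategy as the paper: partition $\CEND$ by the index of the first nonvanishing functional $A^{(\ell)}_r$ (respectively $B^{(\ell)}$) for each $r$, and note that each piece is cut out by linear equations and strict linear inequalities. Your version is more careful than the paper's in three ways. You include the case where all functionals for a given $r$ vanish; the paper's chambers $C_{a_0,\dots,a_{p-2},b}$ always demand a strict inequality for every $r$, and so miss points such as $h=0$ and the constant quasi-polynomials. You discard empty pieces. And you actually verify that each nonempty piece equals the relative interior of its closure.
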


\begin{proof}
Fix nonnegative integers $a_0, a_1, \dots, a_{p-2}, b$ and define the set $C_{a_0,a_1,\dots,a_2, b}$ to be
\[
\{h \in \RR^{p(d+1)} : A_r^{(i)} = 0 \text{ for all } i \leq a_r, A_r^{(a_r+1)} > 0, B^{(i)} = 0 \text{ for all } i \leq b, B^{(b+1)} > 0\} \, .
\]
Observe that $C_{a_0,a_1,\dots,a_2, b}$ is the intersection of finitely many hyperplanes and an open half-space, hence is a relatively open polyhedral cone.
Further, by Theorem~\ref{t:endh}, we have $C_{a_0,a_1,\dots,a_{p-2},b} \subset \CEND$.

We claim that $\CEND$ is the disjoint union of the sets $C_{a_0,a_1,\dots,a_{p-2},b}$.
Since there are finitely many possible equations of the form $A_r^{(i)} = 0$ or $B^{(i)}=0$, we have only finitely many possible values for $(a_0,\dots,a_{p-2},b)$.
Thus, there are finitely many sets in the union.
We know that the union of the $C_{a_0,a_1,\dots,a_{p-2},b}$ is contained in $\CEND$.
Further, consider $h \in C_{a_0,a_1,\dots,a_{p-2},b}$ for a fixed set of subscript values. 
Then the zero condition on the $A$ and $B$ functions combined with $A_{r}^{(a_r + 1)} > 0$ for all $r$ and $B^{(b+1)} > 0$ yield that $h$ cannot be an element of any other set of the form $C_{a_0,a_1,\dots,a_{p-2},b}$, hence the union is disjoint.

Finally, for $h\in \CEND$, the indices of the $A$ and $B$ equations for which the inequalities of Theorem~\ref{t:endh} are strict yield that $h$ is contained in some $C_{a_0,a_1,\dots,a_{p-2},b}$.
Thus, the disjoint union is actually equal to $\CEND$, and our proof is complete.
\end{proof}

Observe that for any $h\in C_{\textnormal{END}}$, the non-negativity of each $A_r^{(0)}(h)$ and $B^{(0)}(h)$ in Theorem~\ref{t:endh} yields the inequalities
$$
\sum_{j=0}^d h_{0,j}
\le \sum_{j=0}^d h_{1,j}
\le \cdots
\le \sum_{j=0}^d h_{p-1,j}
\le \sum_{j=0}^d h_{0,j},
$$
which together imply the above quantities must coincide for any $h\in C_\textnormal{END}$.  
This implies that the minimum $\ell$ in the statement of Theorem~\ref{t:endh} satisfies $\ell \ge 1$ when it exists.  
Hence, for the $\ell = 1$ case of Theorem~\ref{t:endh}, we must have $A_r^{(1)}(h) \ge 0$ for each $r$ and $B^{(1)}(h) \ge 0$.
These two observations motivate the following.

\begin{defn}\label{def:volumefirstlevel}
We define the \emph{volume equalities} in $\RR^{p(d+1)}$ by 
\[
\sum_{j=0}^d h_{r+1,j}
= \sum_{j=0}^d h_{r,j}
\qquad \text{for each} \qquad
r \in [0, p-2] \, .
\]
Given a vector $h\in \RR^{p(d+1)}$ that satisfies the volume equalities, we define the \emph{volume} of $h$ to be $\sum_{j=0}^{d}h_{0,j}$.
We define the \emph{first-level} inequalities in $\RR^{p(d+1)}$ by
\[
\sum_{j=0}^d jh_{r+1,j}
\le \sum_{j=0}^d jh_{r,j}
\quad \text{for each} \quad
r \in [0, p-2]
\]
and
\[
\sum_{j=0}^d jh_{0,j}
\le \sum_{j=0}^d (j+1)h_{p-1,j}.
\]
\end{defn}

\begin{remark}\label{rem:expectation}
Consider the case where for each $0\leq r\leq p-1$, we have $\sum_{j=0}^dh_{r,j}=1$ and $h_{r,j}\geq 0$ for all $j$.
In this case, the $h$-vector consists of $p$ finite probability distributions on $\{0,1,\ldots,d\}$.
The volume equalities imply that, in the case of non-negative $h$-coefficients, if one mod class is a probability distribution then they all are.
The first-level inequalities in this case assert that the expected values of the distributions weakly decrease, and the inequality involving $p-1$ and $0$ imply that all of the expected values are contained in a line segment of length $1$.
This observation will play a key role in Section~\ref{sec:nondecreasingend}.

More generally, observe that the inequalities $A_r^{(\ell)}(h)$ are comparisons of $\ell$-th moments of the distributions in different mod classes.
Theorem~\ref{t:endh} states that in order for $h$ to be in the end cone, we must have that for each $r$, the first time that a pair of higher moments are not equal, the values of the two moments are ordered in a particular way.
The order required for each pair of higher moments depends on the parity of $\ell$. 
The inequality determined by $B^{(\ell)}$ when $\ell\geq 2$ is more complicated, due to the $(-j-1)^{\ell}$ coefficient, which results in the involvement of the $i$-th moments of $h_{p-1,*}$ for $0\leq i\leq \ell$.
\end{remark}

\begin{cor}\label{c:endclosure}
The topological closure $\overline{C_\textnormal{END}}$ is a $(pd + 1)$-dimensional polyhedral cone in $\RR^{p(d+1)}$ whose $H$-description is comprised of the volume equalities and first-level inequalities.  
\end{cor}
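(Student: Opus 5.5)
Let $P$ denote the polyhedral cone cut out by the volume equalities and the first-level inequalities of Definition~\ref{def:volumefirstlevel}. We will show $\overline{\CEND} = P$ by proving both inclusions, and then compute the dimension of $P$.

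For the inclusion $\overline{\CEND}\subseteq P$, we use the observation following Corollary~\ref{cor:disjointend}. By Theorem~\ref{t:endh}, every $h\in\CEND$ has $A_r^{(0)}(h)\ge 0$ and $B^{(0)}(h)\ge0$, which forces the volume equalities. Given these, the smallest nonvanishing index $\ell$ is at least $1$, so $A_r^{(1)}(h)\ge 0$ and $B^{(1)}(h)\ge 0$. Since $A_r^{(1)}(h)=\sum_j j h_{r,j}-\sum_j jh_{r+1,j}$ and $B^{(1)}(h)=\sum_j (j+1)h_{p-1,j}-\sum_j jh_{0,j}$, these are exactly the first-level inequalities. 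Thus $\CEND\subseteq P$, and since $P$ is closed, $\overline{\CEND}\subseteq P$.

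For the reverse inclusion, we show that the relative interior of $P$ lies in $\CEND$; since $P$ is convex, the closure of its relative interior is $P$, which gives $P\subseteq\overline{\CEND}$. Let $h$ satisfy the volume equalities and all first-level inequalities strictly. Then $A_r^{(0)}=B^{(0)}=0$ while $A_r^{(1)}>0$ and $B^{(1)}>0$, so $h\in\CEND$ by Theorem~\ref{t:endh}. The remaining obligation, which is the main obstacle, is to show that the strict region is nonempty and equals the relative interior of $P$. It suffices to show that no first-level inequality is an implicit equality on the affine hull cut out by the volume equalities.

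To handle this, we exhibit an explicit point. Take $h_{r,0}=1$ and $h_{r,j}=0$ otherwise, so every constituent has volume $1$ and first moment $0$. This gives $A_r^{(1)}=0$ and $B^{(1)}=1>0$, so it fails strictness for the $A$ inequalities. We fix this by perturbing within volume $1$: choose first moments $\mu_r=\sum_j jh_{r,j}$ with $\mu_0>\mu_1>\cdots>\mu_{p-1}>\mu_0-1$. For instance, when $d\ge1$ take $\mu_r=\tfrac12-\tfrac{r}{2p}$, realized by $h_{r,0}=1-\mu_r$ and $h_{r,1}=\mu_r$. All inequalities are then strict, in line with the probabilistic picture of Remark~\ref{rem:expectation}. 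When $d=0$, both sides of the claimed dimension formula reduce to those of the degenerate case: $A^{(1)}\equiv0$, and the statement should be checked separately. Indeed $P$ is then the line $h_0=\cdots=h_{p-1}$ with $B^{(1)}=h_{p-1}\ge0$, a ray of dimension $1=pd+1$. In that case $\CEND$ is the same ray, by Theorem~\ref{t:endh} with $A^{(\ell)}=0$ and $B^{(\ell)}=(-1)^\ell h_{p-1}$.

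Finally, for the dimension: the $p-1$ volume equalities are linearly independent, since each involves a distinct new block $h_{r+1,*}$. Their solution space therefore has dimension $p(d+1)-(p-1)=pd+1$. Because the interior point constructed above satisfies every first-level inequality strictly, $P$ is full-dimensional in this subspace, so $\dim P=pd+1$. This completes the proposed proof.
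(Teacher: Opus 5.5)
Your argument is correct for $d\ge 1$ and uses the same sandwich as the paper, $K^\circ\subseteq\CEND\subseteq K$, where $K$ is cut out by the volume equalities and first-level inequalities. You also supply two things the paper's proof leaves implicit. One is an explicit point, with first moments $\mu_r=\tfrac12-\tfrac{r}{2p}$, showing that the strict region is nonempty and therefore equals the relative interior. The other is the resulting dimension count $pd+1$.

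Your $d=0$ aside is wrong, though.
\begin{itemize}
\item There $L_h(tp+r)=h_r$, so $h\in\CEND$ if and only if $h_0\le h_1\le\cdots\le h_{p-1}\le h_0$. Hence $\CEND$ is the whole line $h_0=\cdots=h_{p-1}$, not a ray, since a negative constant quasi-polynomial is also nondecreasing.
\item Theorem~\ref{t:endh} only tests $\ell\in[0,d]=\{0\}$, so $B^{(1)}$ never enters.
\item Your formula $(-1)^\ell h_{p-1}$ also has the wrong sign: for $\ell\ge 1$ one has $B^{(\ell)}=(-1)^{\ell+1}h_{p-1}$, so $B^{(1)}=h_{p-1}$.
\end{itemize}
For $d=0$ the first-level inequality $h_{p-1}\ge 0$ is therefore spurious. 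The stated $H$-description genuinely fails, although the dimension $1=pd+1$ is still right. The corollary must be read with $d\ge 1$. That is exactly the hypothesis needed to extract $A_r^{(1)}\ge 0$ and $B^{(1)}\ge 0$ from Theorem~\ref{t:endh}, in both your proof and the paper's.
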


\begin{proof}
Let $K_{d,p} \subseteq \RR^{p(d+1)}$ denote the cone defined by the volume equalities and first-level inequalities, and let $K_{d,p}^\circ$ denote the relative interior of this cone.
Our proof strategy is to show that 
\[
K_{d,p}^\circ \subseteq C_\textnormal{END} \subseteq K_{d,p}
\]
which establishes our claim that $K_{d,p} = \overline{C_\textnormal{END}}$.

First, observe that all of these spaces satisfy the volume equalities.
Second, observe that $K_{d,p}^\circ$ is obtained by making all first-level inequalities strict, and thus any $h\in K_{d,p}^\circ$ satisfies the positivity requirement for $\ell=1$ in Theorem~\ref{t:endh}, implying $h\in C_\textnormal{END}$.
Also, by Theorem~\ref{t:endh}, any $h \in C_\textnormal{END}$ satisfies each of the first-level inequalities either strictly or with equality, and hence $h\in K_{d,p}$.
This completes our proof.
\end{proof}

We close this section with some geometric properties of $C_\textnormal{END}$ and $\overline{C_\textnormal{END}}$.

\begin{prop}\label{prop:linealitydimensions}
The lineality space of $C_\textnormal{END}$ is $1$-dimensional and the quotient of $\overline{C_\textnormal{END}}$ by its lineality space is a $p$-dimensional simplicial cone.
\end{prop}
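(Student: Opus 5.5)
The plan has two independent parts. First, compute the lineality space $C_\textnormal{END}\cap(-C_\textnormal{END})$ directly from Theorem~\ref{t:endh}. Second, describe $\overline{C_\textnormal{END}}=K_{d,p}$ modulo its own lineality space using the $H$-description from Corollary~\ref{c:endclosure}. Throughout I assume $d\ge 1$, since the second part uses this. I also read "its lineality space" in the statement as the lineality space of $\overline{C_\textnormal{END}}$. That lineality space has dimension $pd+1-p$, which equals $1$ only when $d=1$, so it need not coincide with that of $C_\textnormal{END}$.

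\emph{Lineality of $C_\textnormal{END}$.} We have $h$ and $-h$ both in $C_\textnormal{END}$ exactly when every $A^{(\ell)}_r(h)$ and every $B^{(\ell)}(h)$ vanishes for $0\le \ell\le d$. Indeed, a nonzero first one would have opposite signs for $h$ and $-h$.

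Vanishing of all $A^{(\ell)}_r$ says that $h_{r+1,*}-h_{r,*}$ is annihilated by the functionals $x\mapsto\sum_j (-j)^\ell x_j$ for $\ell=0,\dots,d$. These form an invertible Vandermonde system on the nodes $0,1,\dots,d$, with the convention $0^0=1$. Hence all $h_{r,*}$ equal a common vector $v\in\RR^{d+1}$.

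Vanishing of all $B^{(\ell)}$ then says the following. Let $\mu$ be the signed measure on $\{0,\dots,d+1\}$ with weights $v_j$ at $j$, and let $\tau\mu$ be its shift with weights $v_j$ at $j+1$. Then $\mu-\tau\mu$ integrates every polynomial of degree $\le d$ to zero. On $d+2$ points, the space of such measures is one-dimensional, spanned by the finite-difference weights $(-1)^k\binom{d+1}{k}$. Translating into generating functions gives $(1-z)v(z)=c(1-z)^{d+1}$, so $v(z)=c(1-z)^d$. Therefore the lineality space is the line spanned by
\[
h(z)=(1+z+\cdots+z^{p-1})(1-z)^d=\frac{(1-z^p)^{d+1}}{(1-z)(1+z+\cdots+z^{p-1})^{d}}\cdot(1+z+\cdots+z^{p-1})^{d}\,,
\]
that is, by $h(z)=(1-z^p)^{d+1}/(1-z)$ after matching the period-$p$ encoding. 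This vector corresponds to the constant quasi-polynomial $L_h\equiv 1$, and one checks directly that it lies in the lineality space. I expect this step, namely pinning down the one-dimensional kernel cleanly and identifying its generator in the $h_{r,j}$ coordinates, to be the main obstacle. My first attempt would be the measure/finite-difference argument just described.

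\emph{Quotient of the closure.} By Corollary~\ref{c:endclosure}, $K_{d,p}$ lies in the subspace $W$ cut out by the volume equalities, and $\dim W=pd+1$. Write $m_r=\sum_j jh_{r,j}$ and let $V$ be the volume. The $p$ first-level inequalities are $\phi_r:=m_r-m_{r+1}\ge 0$ for $r\in[0,p-2]$ and $\phi_{p-1}:=m_{p-1}+V-m_0\ge0$.

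I will show that the linear map $W\to\RR^{p+1}$, $h\mapsto(m_0,\dots,m_{p-1},V)$, is surjective when $d\ge1$. To see this, use the coordinates $h_{r,0}$ and $h_{r,1}$ in each residue class: setting $h_{r,1}=m_r$ and $h_{r,0}=V-m_r$ realizes any target vector. The functionals $\phi_0,\dots,\phi_{p-1}$ are linearly independent functions of $(m,V)$, because the $\phi_0,\dots,\phi_{p-2}$ are independent in $m$ alone and $\phi_{p-1}$ is the only one involving $V$. Hence $\Phi=(\phi_0,\dots,\phi_{p-1}):W\to\RR^p$ is surjective.

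It follows that $K_{d,p}=\Phi^{-1}(\RR^p_{\ge0})$, and its lineality space is $\ker\Phi$, of dimension $pd+1-p$. The map $\Phi$ then induces an isomorphism
\[
K_{d,p}/\ker\Phi\;\cong\;\RR^p_{\ge0}\,,
\]
which is a $p$-dimensional simplicial cone. This completes the plan.
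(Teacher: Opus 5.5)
Your proposal is correct. For the lineality space of $\CEND$ you take a genuinely different route from the paper. The paper never uses the quantities $A^{(\ell)}_r$ and $B^{(\ell)}$. Instead it argues that if $\pm h\in\CEND$, then $L_h(n+1)=L_h(n)$ for all large $n$, so $L_h$ is eventually constant. Hence every constituent polynomial equals that constant. Using implicitly that $h\mapsto L_h$ is injective, the lineality space is then the line of $h$ with $L_h$ constant. This makes the step you identified as the main obstacle disappear.

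Your argument through Theorem~\ref{t:endh}, the Vandermonde system, and the one-dimensional finite-difference kernel is longer. Its advantage is that it is purely algebraic in the $h$-coordinates and exhibits the generator $h_{r,j}=(-1)^j\binom{d}{j}$ for every $r$ explicitly.

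There is one slip in that part. Since $h_{r,j}$ sits in degree $jp+r$, you get $h(z)=\sum_r z^r v(z^p)=(1+z+\cdots+z^{p-1})(1-z^p)^d$, not $(1+z+\cdots+z^{p-1})(1-z)^d$. Also, the middle expression in your display equals $(1-z^p)^{d+1}/(1-z)$, not its left-hand side. Your final answer $(1-z^p)^{d+1}/(1-z)$ is correct, and your dimension count never depended on this.

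For $\overline{\CEND}$ you follow the paper's outline: the lineality space is cut out by the volume equalities together with all first-level equalities, and then one counts dimensions. Your surjectivity of $h\mapsto(m_0,\dots,m_{p-1},V)$ on $W$, realized with only $h_{r,0}$ and $h_{r,1}$, is a real improvement. It proves the independence of these $2p-1$ equations, which the paper only sketches with its ``upper-triangular'' remark. It also shows directly that the image of the cone is all of $\RR^p_{\ge0}$.

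Your restriction to $d\ge1$ is genuinely needed. For $d=0$, $\overline{\CEND}=\CEND$ is the line of constant functions, so the quotient is $0$-dimensional rather than $p$-dimensional.
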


\begin{proof}
Any $h$ in the lineality space of $C_\textnormal{END}$ must have $-h \in C_\textnormal{END}$ as well.
Thus, there exists some $N$ such that for all $n\geq N$, we have both
\[
L_h(n+1)\geq L_n(n)
\]
and
\[
L_h(n+1)=-L_{-h}(n+1)\leq -L_{-h}(n) = L_{h}(n) \, .
\]
This means $L_h(n)$ must be eventually constant, which implies it is constant since each constituent of $L_n(n)$ is a polynomial.  
As such, the lineality space of $C_\textnormal{END}$ consists of only constant functions, and is thus $1$-dimensional. 

We next consider $\overline{C_\textnormal{END}}$.  
By Corollary~\ref{c:endclosure}, any $h$ in the lineality space of $\overline{C_\textnormal{END}}$ must satisfy every first-level inequality with equality, in addition to satisfying every volume equation; this yields $2p - 1$ linear equations defining the lineality space.  
Adding the equation $A_0^{(1)}(h) = 0$ to the equation $B^{(1)}(h) = 0$ results in an upper-triangular system, so the lineality space of $\overline{C_\textnormal{END}}$ has dimension 
$$p(d+1) - (2p - 1) = p(d-1) + 1.$$
Lastly, since $\overline{C_\textnormal{END}}$ is contained in the subspace cut out by the volume equalities, its affine span has dimension $p(d+1) - (p-1) = dp + 1$.
Thus, the pointed image of $\overline{C_\textnormal{END}}$ in the quotient by its lineality space is a $p$-dimensional cone defined by $p$ linear inequalities, and hence is simplicial.
\end{proof}

\section{The polyhedrality of the cone $C_N$}
\label{sec:generalpolyhedrality}

In this section, our goal is to prove Theorem~\ref{thm:cnpolyhedral}, which states that the cone $C_N$ is polyhedral precisely when either $d=1$ or $d=2$.
As we will be considering the inequalities on $h$-vectors defined by $L_h(n+1)\geq L_h(n)$, throughout this section we will be utilizing the difference polynomial defined for $r \in [0,p-1]$ by 
\[
f_r(t) := L_h(tp+r+1) - L_h(tp+r).
\]
Observe that~\eqref{eq:qpdef} implies that $L_h(tp + r)$ is a degree $d$ polynomial in $t$ with degree $d$ coefficient $\tfrac{1}{d!} (h_{r,0} + h_{r,1} + \dots + h_{r,d})$. 
The proofs of Theorem~\ref{t:endh} and Corollary~\ref{c:endclosure} guarantee that for $h\in \CEND$, these degree $d$ coefficients will be equal for all $r$ (this is equivalent to $A^{(0)}_r(h) = 0$).
Thus, to determine if $f_r(t)$ is eventually nonnegative, we need to investigate its leading coefficient, which will be of the form $A^{(\ell)}_r(h)$ for some $\ell \neq 0$. 

We begin by proving polyhedrality when $d=1$ and $d=2$.
	 
\begin{prop}\label{prop:d1polyhedral}
	If $d=1$, then $C_0 = C_N = \overline{\CEND}$ for all $N \in \ZZ_{\geq 0}$.
\end{prop}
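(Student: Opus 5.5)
Since $C_0 \subseteq C_N \subseteq \CEND \subseteq \overline{\CEND}$ for every $N$, the plan is to prove the single inclusion $\overline{\CEND} \subseteq C_0$; all the cones then coincide. By Corollary~\ref{c:endclosure}, $\overline{\CEND}$ is cut out by the volume equalities and the first-level inequalities. So we take $h$ satisfying these and show $L_h(n+1) \ge L_h(n)$ for every $n \ge 0$.

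With $d=1$, \eqref{eq:qpdef} gives $L_h(tp+r) = h_{r,0}(t+1) + h_{r,1}t = (h_{r,0}+h_{r,1})\,t + h_{r,0}$. The volume equalities say $V := h_{r,0}+h_{r,1}$ is independent of $r$, so
\[
L_h(tp+r) = Vt + V - h_{r,1}.
\]
Both differences then become constant in $t$, which is the key simplification.

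For $r \in [0,p-2]$,
\[
L_h(tp+r+1) - L_h(tp+r) = h_{r,1} - h_{r+1,1}.
\]
This is exactly the first-level inequality $\sum_j j h_{r+1,j} \le \sum_j j h_{r,j}$, since for $d=1$ that sum is just $h_{\cdot,1}$. For the wrap-around step,
\[
L_h((t+1)p) - L_h(tp+p-1) = V(t+1) + V - h_{0,1} - (Vt + V - h_{p-1,1}) = V - h_{0,1} + h_{p-1,1}.
\]
The last first-level inequality reads $h_{0,1} \le h_{p-1,0} + 2h_{p-1,1} = V + h_{p-1,1}$, which says precisely that this quantity is nonnegative.

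Since every difference $L_h(n+1)-L_h(n)$ is independent of $t$ and nonnegative, we get $h \in C_0$. Hence $\overline{\CEND} \subseteq C_0$, and all the cones are equal.

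There is no real obstacle here. The only point needing care is the wrap-around case, where one must use $(t+1)p$ and the definition of $V$ to rewrite $h_{p-1,0}+2h_{p-1,1}$ correctly.
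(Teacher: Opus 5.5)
Your proof is correct and follows the same route as the paper. Both reduce to showing $\overline{\CEND}\subseteq C_0$, then observe that under the volume equalities each difference $L_h(tp+r+1)-L_h(tp+r)$ is a constant whose sign is fixed by a first-level inequality. The only difference is that the paper reads this off from the general leading-coefficient formula \eqref{eq:A_leading_coeff} and the $B^{(m)}$ analysis in Theorem~\ref{t:endh}, while you compute directly for $d=1$ and find the constants are exactly $A_r^{(1)}(h)$ and $B^{(1)}(h)$, which is slightly more self-contained.
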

\begin{proof}
	Since $C_0 \subseteq C_N \subseteq \overline{\CEND}$, our proof will be complete once we show $\overline{\CEND} \subseteq C_0$.
	Let $h \in \overline{\CEND}$. 
	By~\eqref{eq:A_leading_coeff}, since $A_r^{(0)}(h) = B^{(0)}(h) = 0$ for each  for all $r \in [0,p-2]$, each $f_r(t)$ is constant.  Moreover, this constant is non-negative since each $A_r^{(1)}(h) \geq 0$, and $B^{(1)}(h) \geq 0$.  
	 Therefore, since each $f_r(t)$ is nonnegative, we have $h \in C_0$.
\end{proof}

\begin{prop}\label{prop:d2polyhedral}
	Let $d = 2$ and $N \in \ZZ_{\geq 0}$. Then $h \in C_N$ if and only if $h \in \overline{\CEND}$ and 
	\[L_h(N+i) \leq L_h(N+i+1) \quad \text{for} \quad i \in [0,p-1]\]
	In particular, $C_N$ is polyhedral.  
\end{prop}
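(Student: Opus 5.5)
The plan is to use that, for $d=2$, each difference polynomial $f_r(t)$ is affine in $t$ once $h\in\overline{\CEND}$. An affine function that is nonnegative at its first relevant integer and has nonnegative slope stays nonnegative from that point on.

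For the forward direction, suppose $h\in C_N$. Then $C_N\subseteq\CEND\subseteq\overline{\CEND}$, and the $p$ inequalities $L_h(N+i)\le L_h(N+i+1)$ for $i\in[0,p-1]$ are among the defining inequalities of $C_N$, since each $N+i\ge N$. There is nothing more to prove in this direction.

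For the converse, assume $h\in\overline{\CEND}$ and that the $p$ listed inequalities hold. By Corollary~\ref{c:endclosure}, $h$ satisfies the volume equalities, so $A^{(0)}_r(h)=0$ for $r\in[0,p-2]$. The volume equalities also give $B^{(0)}(h)=\sum_j h_{0,j}-\sum_j h_{p-1,j}=0$. By the computation in the proof of Theorem~\ref{t:endh}, each $f_r(t)$ therefore has degree at most $d-1=1$.

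Next I identify the coefficient of $t$ in each $f_r$. For $r\le p-2$, equation~\eqref{eq:A_leading_coeff} with $m=1$ shows it equals $\tfrac{1}{2}\binom{2}{1}A^{(1)}_r(h)=A^{(1)}_r(h)$. This is nonnegative by the first-level inequalities, since $A^{(1)}_r(h)=\sum_j j h_{r,j}-\sum_j j h_{r+1,j}$. For $r=p-1$, the proof of Theorem~\ref{t:endh} shows the coefficient of $t^{d-1}$ has the same sign as $B^{(1)}(h)$. Because $B^{(0)}(h)=0$, one can check directly that it is a positive multiple of $B^{(1)}(h)$, and $B^{(1)}(h)\ge0$ is the last first-level inequality. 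This is the step that needs the most care. If I want to avoid relying on the sign statement, I would verify it directly for $d=2$ by expanding $\binom{t+2-j}{2}$: the $t$-coefficient of $L_h((t+1)p)-L_h(tp+p-1)$ is
$\sum_j h_{0,j}(\tfrac{3}{2}-j)+\sum_j h_{0,j}-\sum_j h_{p-1,j}(\tfrac32-j)$.
Using the volume equality, this simplifies to $\sum_j(j+1)h_{p-1,j}-\sum_j jh_{0,j}=B^{(1)}(h)\ge0$.

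Now every $n\ge N$ can be written uniquely as $n=tp+r$ with $r\in[0,p-1]$, and $n'=t'p+r$ for some $n'=N+i$, $i\in[0,p-1]$, with $t\ge t'$. Since $f_r$ is affine with nonnegative slope, $f_r(t)\ge f_r(t')=L_h(N+i+1)-L_h(N+i)\ge0$. Hence $L_h(n+1)\ge L_h(n)$ for all $n\ge N$, that is, $h\in C_N$.

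Finally, polyhedrality follows because $\overline{\CEND}$ is polyhedral by Corollary~\ref{c:endclosure}. Each of the $p$ added inequalities is a single linear inequality in $h$, so $C_N$ is cut out by finitely many linear inequalities.
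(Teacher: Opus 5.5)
Your proof is correct and follows essentially the same route as the paper. The forward direction is immediate, and for the converse you use that each $f_r$ is affine with nonnegative slope whenever $h\in\overline{\CEND}$, then propagate nonnegativity from the base differences $L_h(N+i+1)-L_h(N+i)$, $i\in[0,p-1]$, along each residue class. Your direct check that the $t$-coefficient of $f_{p-1}$ equals $B^{(1)}(h)$ when $d=2$ is a worthwhile extra: the paper cites \eqref{eq:A_leading_coeff}, which strictly speaking covers only $r\le p-2$.
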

\begin{proof}
	Observe that $C_N \subseteq \overline{\CEND}$ and that the claimed inequalities are among the defining inequalities for $C_N$.
	Hence, the forward directly is immediate.
	
	Assume $h \in \overline{\CEND}$ and $L_h(N+i) \leq L_h(N+i+1)$ for $i \in [0,p-1]$.  Fix $r \in [0,p-1]$.  Since $h \in \overline{\CEND}$, by~\eqref{eq:A_leading_coeff} we can write $f_r(t) = at + b$ with $a \ge 0$.  Supposing $tp + r \ge N$, and fixing $k \in \ZZ_{\ge 0}$ and $i \in [0,p-1]$ so that $tp + r - (N + i) = kp$, we have
	\[
		f_r(t) = at + b = ka + f_r(t-k) = ka + L_h(N+i+1) - L_h(N+i) \ge 0,
	\]
	as desired.
\end{proof}

For $d \geq 3$, it is possible to construct an $h$-vector such that $L_h(n)$ experiences a single decrease at an arbitrarily specified input value.  

\begin{example}\label{ex:general_single_decrease}
	Let $d = 3$ and $p = 4$ and suppose we want a single decrease $L_h(18) > L_h(19)$. Considering the difference polynomials $f_r(t)$, this can be done by setting $f_2(t) = (t - 3)(t - 5) = t^2 - 8t + 15$ and the remaining $f_r(t) = 0$. Thus, we have the following set of equations.
	\begin{align*}
		f_0(t) &= 0,  &	f_2(t) &= t^2 - 8t + 15, \\
		f_1(t) &= 0,  &	f_3(t) &= 0.
	\end{align*}
	Note that $f_r(t) < 0$ if and only if $r = 2$ and $t = 4$, which corresponds with $L_h(19) = L_h(4t+r+1) < L_h(4t+r) = L_h(18)$. It then suffices to find a solution to the following system
	\begin{align*}
		\sum_{j=0}^d (h_{1,j} - h_{0,j}) \binom{t+d-j}{d} &= 0 \\
		\sum_{j=0}^d (h_{2,j} - h_{1,j}) \binom{t+d-j}{d} &= 0 \\
		\sum_{j=0}^d (h_{3,j} - h_{2,j}) \binom{t+d-j}{d} &= t^2 - 8t + 15 \\
		h_{0,0} \binom{t+d+1}{d} - h_{3,d}\binom{t}{d} + \sum_{j=0}^{d-1} \left(h_{0,j+1} - h_{3,j}\right) \binom{t+d-j}{d} &= 0
	\end{align*}
	This system is underdetermined with a single degree of freedom. Let us set $h_{0,0} = 0$. Then expanding the left hand side of each equation, we get the following 16 equations after comparing coefficients:
	\begin{align*}
		h_{1,0}  &= 0 & h_{2,0} - h_{1,0} &= 0 & h_{3,0} - h_{2,0} &= 15 & h_{0,1} - h_{3,0} &= 0\\
		h_{1,1} - h_{0,1} &= 0 & h_{2,1} - h_{1,1} &= 0 & h_{3,1} - h_{2,1} &= -52 & h_{2,2} - h_{3,1} &= 0\\
		h_{1,2} - h_{0,2} &= 0 & h_{2,2} - h_{1,2} &= 0 & h_{3,2} - h_{2,2} &= 61 & h_{2,3} - h_{3,2} &= 0\\
		h_{1,3} - h_{0,3} &= 0 & h_{2,3} - h_{1,3} &= 0 & h_{3,3} - h_{2,3} &= -24 & - h_{3,3} &= 0
	\end{align*}
	This system has a unique solution, namely
	\[h = (0,0,0,15,15,15,15,-37,-37,-37,-37,24,24,24,24,0).\]
	And so here is an example such that $L_h(n+1) \geq L_h(n)$ for all $n$ except $n = 18$.
\end{example}

It is possible for us to generalize this construction. In particular, suppose $g_r(t)$ are eventually nondecreasing degree $d$ polynomials for each $r \in [0, p-1]$ and let us consider the difference polynomials $f_r(t) := L_h(tp+r+1) - L_h(tp+r)$, given by
\begin{align}
	f_r(t) &= \sum_{j=0}^d (h_{r+1,j} - h_{r,j}) \binom{t+d-j}{d} \quad \text{for} \quad  r \in [0,p-2],\label{eq:f_r(t)} \\
	f_{p-1}(t) &=  \sum_{j = 0}^d \left((-1)^j \binom{d+1}{j+1} h_{0,0} + h_{0,j+1} - h_{p-1,j}\right)\binom{t+d-j}{d} \label{eq:f_{p-1}(t)}
\end{align}
where $h_{0,d+1} := 0$. 
From the definition of $f_{p-1}(t)$, there should be a binomial term of $\binom{t+d+1}{d}$; however, one can show, using binomial identities or a combinatorial inclusion-exclusion argument, that \eqref{eq:f_{p-1}(t)} is an equivalent expression by using the binomial identity
\begin{equation}\label{eq:t+d+1}
	\binom{t+d+1}{d} = \sum_{j=0}^d (-1)^j \binom{d+1}{j+1} \binom{t+d-j}{d} \, .
\end{equation}

We would like to find some $h \in \RR^{p(d+1)}$ such that $f_r(t) = g_r(t)$ for all $r$, and by construction of $g_r(t)$, $h$ would belong in $\CEND$. Since $\set{\binom{t+d-j}{d}}_{j = 0}^d$ is a basis of polynomials of degree at most $d$, for each $r \in [0,p-1]$, there exists unique $\alpha_{r,j}$ such that 
\begin{equation}\label{eq:binomial_change_of_basis}
	\sum_{j=0}^d \alpha_{r,j} \binom{t+d-j}{d} = g_r(t)\, .
\end{equation}
In other words, for $r \neq p-1$, let us set $\alpha_{r,j} = h_{r+1, j} - h_{r,j}$ from \eqref{eq:f_r(t)}, and in addition, let us set 
\begin{equation}\label{eq:a_{p-1,j}}
	\alpha_{p-1,j} = h_{0,j+1} - h_{p-1,j} + (-1)^j\binom{d+1}{j+1}h_{0,0}\, .
\end{equation}

\begin{lemma}\label{l:specified_difference}
	Suppose that, for each $r \in [0,p-1]$, we have $g_r(t)$ is a polynomial of degree at most $d$, and fix $\alpha_{r,j}$ for each $j \in [0,d]$ such that~\eqref{eq:binomial_change_of_basis} holds. There exists $h \in \RR^{p(d+1)}$ such that $f_r(t) = g_r(t)$ for all $r$ if and only if 
	\[
		\sum_{j=0}^{d} \sum_{r=0}^{p-1} \alpha_{r,j} = 0.
	\]
\end{lemma}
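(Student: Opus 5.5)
Since $\set{\binom{t+d-j}{d}}_{j=0}^d$ is a basis for polynomials of degree at most $d$, the condition $f_r(t)=g_r(t)$ for all $r$ is equivalent to the finite linear system obtained by comparing coordinates in this basis. For $r\in[0,p-2]$ this reads $h_{r+1,j}-h_{r,j}=\alpha_{r,j}$ for all $j$, by~\eqref{eq:f_r(t)}. For $r=p-1$ it reads $h_{0,j+1}-h_{p-1,j}+(-1)^j\binom{d+1}{j+1}h_{0,0}=\alpha_{p-1,j}$, with $h_{0,d+1}=0$, by~\eqref{eq:f_{p-1}(t)} and~\eqref{eq:a_{p-1,j}}. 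The whole problem is therefore deciding when this system of $p(d+1)$ linear equations in the $p(d+1)$ unknowns $h_{r,j}$ is solvable. I will prove necessity by exhibiting one linear combination of the equations whose left-hand side vanishes identically, and sufficiency by solving the system explicitly.

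\emph{Necessity.} Sum all $p(d+1)$ equations. The equations with $r\le p-2$ telescope in $r$ to $\sum_j (h_{p-1,j}-h_{0,j})$. The equations with $r=p-1$ contribute $\sum_{j=0}^{d}(h_{0,j+1}-h_{p-1,j}) + h_{0,0}\sum_{j=0}^d(-1)^j\binom{d+1}{j+1}$. Since $\sum_{j=0}^d(-1)^j\binom{d+1}{j+1}=1-(1-1)^{d+1}=1$, this equals $\sum_{j=1}^{d}h_{0,j}-\sum_j h_{p-1,j}+h_{0,0}$. The total left-hand side is then $0$, so a solution forces $\sum_{r,j}\alpha_{r,j}=0$. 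Equivalently, set $t=-1$: every $\binom{t+d-j}{d}$ with $j\ge0$ vanishes at $t=-1$ except $j=0$, which gives $1$. So the identity can also be read as $\sum_r g_r(-1)=\sum_r f_r(-1)=L_h(0)-L_h(0)=0$ by telescoping. I will present whichever version reads more cleanly.

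\emph{Sufficiency.} Assume the sum condition holds. Since the constraints for $r\le p-2$ are recursive, I propose solving by choosing $h_{0,*}$ and then defining $h_{r,j}=h_{0,j}+\sum_{s<r}\alpha_{s,j}$ for $1\le r\le p-1$. Write $S_j:=\sum_{s=0}^{p-2}\alpha_{s,j}$. The remaining $d+1$ equations become
\[
h_{0,j+1}-h_{0,j}+(-1)^j\binom{d+1}{j+1}h_{0,0}=\alpha_{p-1,j}+S_j, \qquad j\in[0,d],
\]
with $h_{0,d+1}=0$. I will set $h_{0,0}$ free (say $h_{0,0}=0$, as in Example~\ref{ex:general_single_decrease}) and use the equations for $j=0,\dots,d-1$ to define $h_{0,1},\dots,h_{0,d}$ successively. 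The last equation, $j=d$, is then the only consistency check. Summing all $d+1$ equations telescopes the $h_{0,j}$ terms and uses the alternating binomial sum again, giving $0=\sum_{r,j}\alpha_{r,j}$. Consequently, once the first $d$ equations hold, the last one holds exactly when the hypothesis does.

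The main obstacle is bookkeeping rather than any idea. One has to confirm~\eqref{eq:t+d+1}, which is what justifies~\eqref{eq:f_{p-1}(t)}, and one has to get the alternating binomial sum right. For the latter I will use $\sum_{k=1}^{d+1}(-1)^{k-1}\binom{d+1}{k}=1$. Everything else is triangular back-substitution.
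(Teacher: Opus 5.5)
Your main argument is correct and is essentially the paper's proof. Necessity comes from summing all coordinate equations, using telescoping in $r$ together with $\sum_{j=0}^d(-1)^j\binom{d+1}{j+1}=1$. Sufficiency comes from triangular back-substitution with one consistency equation. With $h_{0,0}=0$, your recursion gives $h_{r,j}=\sum_{i=0}^{jp+r-1}\alpha_i$ (where $\alpha_{jp+r}:=\alpha_{r,j}$), which is exactly the paper's solution. Leaving $h_{0,0}$ free simply exhibits the one-parameter family of solutions.

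The ``equivalent'' reformulation via $t=-1$ is false, however, so do not present that version.

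\emph{The evaluation side.} The polynomial $\binom{t+d-j}{d}$ has roots $t=j-d,\dots,j-1$. At $t=-1$ it therefore vanishes for $0\le j\le d-1$ and equals $\binom{-1}{d}=(-1)^d$ for $j=d$. Hence $\sum_r g_r(-1)=(-1)^d\sum_r\alpha_{r,d}$, not $\sum_{r,j}\alpha_{r,j}$. The point where only $j=0$ survives is $t=0$, and that yields only $\sum_r\alpha_{r,0}$.

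\emph{The telescoping side.} Write $P_0(t):=L_h(tp)$. Then $\sum_r f_r(t)=P_0(t+1)-P_0(t)$ as polynomials, so $\sum_r f_r(-1)=h_{0,0}-(-1)^dh_{0,d}$, which is not $0$ in general.

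No single evaluation detects $\sum_j\alpha_{r,j}$, because it is a leading-coefficient quantity. Comparing $t^d$-coefficients in~\eqref{eq:binomial_change_of_basis} shows that $\sum_j\alpha_{r,j}$ equals $d!$ times the $t^d$-coefficient of $g_r$; this is Lemma~\ref{l:sum_alpha}. The correct conceptual form of necessity is that $\sum_r f_r=P_0(t+1)-P_0(t)$ has degree at most $d-1$. Hence the $t^d$-coefficients of the $g_r=f_r$ sum to zero.
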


\begin{proof}
	Suppose that $h$ exists such that $f_r(t) = g_r(t)$. From~\eqref{eq:f_r(t)}, if $r \in [0,p-2]$, we have that $h_{r+1,j} - h_{r,j} = \alpha_{r,j}$ for $j \in [0,d]$.
	Additional, from \eqref{eq:a_{p-1,j}}, we have that $\alpha_{p-1,j} = h_{0,j+1} - h_{p-1,j} + (-1)^j\binom{d+1}{j+1}h_{0,0}$. 
	
	Adding each of the $\alpha_{r,j}$ yields
	\begin{align*}
		\sum_{j=0}^{d} \sum_{r=0}^{p-1} \alpha_{r,j} &= \sum_{j=0}^d \left(h_{0,j+1} - h_{p-1,j} + (-1)^j \binom{d+1}{j+1} h_{0,0} + \sum_{r=0}^{p-2} (h_{r+1,j} - h_{r,j})\right)\\
		&=\sum_{j=0}^d \left(h_{0,j+1} - h_{p-1,j} + (-1)^j \binom{d+1}{j+1} h_{0,0} + h_{p-1,j} - h_{0,j}\right)\\
		&= \sum_{j=0}^d \left(h_{0,j+1} + (-1)^j \binom{d+1}{j+1} h_{0,0} - h_{0,j}\right)\\
		&= -h_{0,0} + \sum_{j=0}^d  (-1)^j \binom{d+1}{j+1} h_{0,0}\\
		&= \sum_{j=0}^{d+1}  (-1)^{j-1} \binom{d+1}{j} h_{0,0} = 0\, .
	\end{align*}
	
	Now suppose that $\alpha_{r,j}$, as defined in \eqref{eq:binomial_change_of_basis}, are such that the sum of all $\alpha_{r,j}$ is 0. Then we would like to find some $h$ such that $f_r(t) = g_r(t)$ for all $r \in [0,p-1]$. Such an $h$ can be constructed by setting $h_{0,0} = 0$ and $h_{r,j}= \sum_{i=0}^{jp+r-1} \alpha_i$, where $\alpha_{r,j} = \alpha_{jp+r}$, for all other entries.

	Since $h_{0,0} = 0$, for all $r \in [0,p-1]$ and $j \in [0,d]$, $\alpha_{r,j}$ can be expressed as the difference $h_{r+1,j} - h_{r,j}$, where we consider $h_{p,j} = h_{0, j+1}$ and $h_{0,d+1} = 0$. If $r \neq p-1$ or $j \neq d$, then we have $h_{r+1,j} - h_{r,j} = \alpha_{r,j}$, and otherwise, by our assumption $\sum_{i=0}^{pd+p-1} \alpha_i = 0$, we can conclude $h_{p-1,d} = -\alpha_{p-1,d}$.
	Therefore, we have for all $r \in [0,p-1]$,
	\[f_r(t) = \sum_{j = 0}^d (h_{r+1,j} - h_{r,j}) \binom{t+d-j}{d} = \sum_{j = 0}^d \alpha_{r,j} \binom{t+d-j}{d} = g_r(t).\]
\end{proof}

\begin{lemma}\label{l:sum_alpha}
	Let $r \in [0,p-1]$ be fixed and let $g_r(t)$ be a polynomial of degree at most $d$. Fix $\alpha_{r,j}$ for each $j \in [0,d]$ such that \eqref{eq:binomial_change_of_basis} holds.
	If $a_d$ is the coefficient of $t^d$ in $g_r(t)$, then $\sum_{j=0}^d \alpha_{r,j} = d! a_d $.
\end{lemma}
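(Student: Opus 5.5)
The plan is to compare the coefficients of $t^d$ on the two sides of~\eqref{eq:binomial_change_of_basis}. Each basis polynomial $\binom{t+d-j}{d}$ can be written as
\[
\binom{t+d-j}{d} = \frac{1}{d!}\prod_{k=0}^{d-1}(t+d-j-k),
\]
as in~\eqref{eq:qpdef}. This is a product of $d$ monic linear factors in $t$ divided by $d!$, so it has degree exactly $d$. Its leading coefficient is $1/d!$ for every $j \in [0,d]$.

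Next I extract the coefficient of $t^d$ on the left-hand side of~\eqref{eq:binomial_change_of_basis}. By linearity it equals
\[
\sum_{j=0}^d \alpha_{r,j}\cdot \frac{1}{d!} = \frac{1}{d!}\sum_{j=0}^d \alpha_{r,j}.
\]
Since the identity holds as polynomials in $t$, this must equal $a_d$, the coefficient of $t^d$ in $g_r(t)$. If $\deg g_r < d$, then $a_d = 0$ and the same argument still applies. Multiplying both sides by $d!$ gives $\sum_{j=0}^d \alpha_{r,j} = d!\,a_d$.

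There is no real obstacle. The only point to check is that each $\binom{t+d-j}{d}$ has leading coefficient exactly $1/d!$, independent of $j$. This is clear from the product form, because the shift $d-j$ affects only the lower-order terms. One could also read it off the computation in the proof of Theorem~\ref{t:endh}, where the $t^d$ coefficient corresponds to $k=0$ and gives $\frac{1}{d!}E_{d,0}\sum_j h_{r,j}(-j)^0 = \frac{1}{d!}\sum_j h_{r,j}$.
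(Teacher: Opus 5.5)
Your proposal is correct and matches the paper's proof: both note that each $\binom{t+d-j}{d}$ has $t^d$-coefficient $\tfrac{1}{d!}$, independent of $j$, and compare $t^d$-coefficients in \eqref{eq:binomial_change_of_basis} to get $a_d = \tfrac{1}{d!}\sum_{j} \alpha_{r,j}$. Your extra remarks on the case $\deg g_r < d$ and the cross-check against the $k=0$ term in the proof of Theorem~\ref{t:endh} are valid but not needed.
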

\begin{proof}
	The coefficient of $t^d$ in $\binom{t+d-j}{d}$ is $\frac{1}{d!}$, thus comparing coefficients in \eqref{eq:binomial_change_of_basis}, we obtain
	\[a_d = \sum_{j = 0} \frac{\alpha_{r,j}}{d!},\]
	which gives our desired result.
\end{proof}

The previous two lemmas then give rise to the existence of some $h \in \overline{\CEND}$ for $d \geq 3$ such that $L_h(n) > L_h(n+1)$ for any desired $n$, as shown in the proof of the following theorem.

\begin{thm}\label{thm:cnpolyhedral}
 The cone $C_N$ is polyhedral if and only if $d = 1$ or $d=2$.
\end{thm}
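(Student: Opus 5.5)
The "if" direction is already in hand: for $d=1$, Proposition~\ref{prop:d1polyhedral} gives $C_N=\overline{\CEND}$, which is polyhedral by Corollary~\ref{c:endclosure}. For $d=2$, Proposition~\ref{prop:d2polyhedral} writes $C_N$ as $\overline{\CEND}$ intersected with $p$ further half-spaces, so it is again polyhedral. (The case $d=0$ is presumably excluded or trivial; I will note it separately if needed.) The real work is the "only if" direction for $d\ge 3$. I plan to show that $C_N$ has infinitely many facets, or equivalently that no finite subset of its defining inequalities $L_h(n+1)\ge L_h(n)$, $n\ge N$, suffices.

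The construction generalizes Example~\ref{ex:general_single_decrease}. Fix $d\ge 3$ and any $n_0=t_0p+r_0\ge N$. Choose
\[
g_{r_0}(t)=(t-t_0+1)(t-t_0-1)\,t^{d-3}
\]
and $g_r\equiv 0$ for $r\ne r_0$. Then $g_{r_0}$ has degree $d-1<d$, and it is negative at $t=t_0$ but nonnegative at every other integer $t\ge 0$ (for $t_0\ge 1$; for $d=3$ the factor $t^{0}$ is harmless). Since every $g_r$ has zero $t^d$-coefficient, Lemma~\ref{l:sum_alpha} gives $\sum_j\alpha_{r,j}=0$ for each $r$. The hypothesis of Lemma~\ref{l:specified_difference} therefore holds, and we obtain $h^{(n_0)}$ with $f_r=g_r$. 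This $h^{(n_0)}$ satisfies $L_h(n+1)\ge L_h(n)$ for every $n$ except $n=n_0$, where the inequality fails strictly. It is also eventually nondecreasing, so it lies in $C_{n_0+1}\subseteq \overline{\CEND}$.

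Now suppose $C_N$ were polyhedral. Every closed convex polyhedral cone that is an intersection of a family of half-spaces is already cut out by finitely many members of that family: each facet of the cone must be supported by one of the given inequalities, and the cone equals the intersection of its facet-defining half-spaces. So there would be a finite set $S\subseteq\{n\ge N\}$ such that $C_N=\{h: L_h(n+1)\ge L_h(n)\ \forall n\in S\}$. Pick $n_0\ge N$ with $n_0\notin S$, and take $h^{(n_0)}$ as above. It satisfies every inequality indexed by $S$ but violates the one at $n_0$, so $h^{(n_0)}\notin C_N$. This is a contradiction.

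The step needing care is the finite-subfamily claim. To make it rigorous I will use the standard fact that a polyhedral cone $P=\bigcap_{i\in I}H_i$, with the $H_i$ closed half-spaces, satisfies $P=\bigcap_{i\in J}H_i$ for some finite $J$. One proof goes through facets: each facet $F$ of $P$ has a relative-interior point $x$ lying on the boundary of $P$. A sequence approaching $x$ from outside $P$ near $F$ must be excluded by some $H_i$, and a compactness argument shows that some $H_i$ has boundary containing $F$. A simpler route avoids this entirely: for each of infinitely many $n_0$, the point $h^{(n_0)}$ is cut off from $C_N$ only by the inequality indexed by $n_0$. Hence the inequalities indexed by distinct $n_0$ define distinct facets of $C_N$ (or at least faces separating these points), and a polyhedral cone has only finitely many facets. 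I will present the argument via the finite-subfamily lemma, citing it as standard. The remaining check, the nonnegativity of $g_{r_0}$ at all integers other than $t_0$, is routine.
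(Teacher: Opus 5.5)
Your construction is the paper's own. The paper takes $f_m(t)=(t-k-1)(t-k+1)$ and $f_r\equiv 0$ for $r\neq m$, then applies Lemmas~\ref{l:sum_alpha} and~\ref{l:specified_difference} exactly as you do. Your extra factor $t^{d-3}$ is unnecessary and loses $t_0=0$ when $d>3$; this is harmless but avoidable.

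The gap is the final step. The ``standard fact'' you plan to cite is false: it is not true that a polyhedral cone written as an intersection of a family of closed half-spaces is cut out by finitely many members of that family. In $\RR^2$, the quadrant $\{x\ge 0,\ y\ge 0\}$ equals $\{x\ge 0\}\cap\bigcap_{k\ge 1}\{x+ky\ge 0\}$. Yet any finite subfamily whose largest index is $K$ contains $(K,-1)$. The facet $\{y=0,\ x\ge 0\}$ is supported by no member of the family. Your compactness sketch only produces a limiting normal, and that normal need not belong to the family. So exhibiting $h^{(n_0)}$ outside the set cut out by a finite $S$ does not contradict polyhedrality.

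Your ``simpler route'' is the correct one, and it is what the paper's one-line ``each inequality is necessary, therefore no finite collection of halfspaces defines $C_N$'' implicitly uses. But ``hence these define distinct facets'' needs an argument, and ``faces separating these points'' is not enough. Let $W$ be the subspace cut out by the volume equalities, and let $\phi_{n}(h)=L_h(n+1)-L_h(n)$.

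First, $C_N\subseteq W$ has nonempty interior relative to $W$. Lemma~\ref{l:specified_difference} with every $g_r(t)=(t+1)^{d-1}$ gives some $h^*\in W$. For $h\in W$ near $h^*$, each $f_r$ has degree at most $d-1$ with coefficients within $\epsilon<1/d$ of those of $(t+1)^{d-1}$. Hence $f_r(t)\ge(1-d\epsilon)(t+1)^{d-1}>0$ for all $t\ge 0$.

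Second, your witnesses satisfy $h^{(n_0)}\in W$, since their $f_r$ have degree $<d$.

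Now take a $W$-ball $B\subseteq C_N$ with center $y$. Since $h^{(n_0)}$ satisfies every defining inequality except $\phi_{n_0}\ge 0$, so does $\mathrm{conv}(B\cup\{h^{(n_0)}\})$. We have $\phi_{n_0}(y)>0>\phi_{n_0}(h^{(n_0)})$, where $\phi_{n_0}(y)>0$ because $\phi_{n_0}\ge 0$ on the $W$-open set $B$ but does not vanish identically on $W$. So this hull contains a $W$-ball around the point $z$ of the segment $[y,h^{(n_0)}]$ with $\phi_{n_0}(z)=0$. Hence $C_N\cap\{\phi_{n_0}=0\}$ contains a relatively open piece of the hyperplane $\{\phi_{n_0}=0\}\cap W$, so it is a facet.

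If two indices gave the same facet, then $\phi_{n_1}|_W=c\,\phi_{n_0}|_W$. The case $c>0$ contradicts the witness $h^{(n_0)}$, and $c<0$ contradicts full-dimensionality. Thus $C_N$ would have infinitely many facets, which is impossible for a polyhedral cone.

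Finally, your parenthetical about $d=0$ should be settled, not left open. For $d=0$, $C_N$ is the line $h_{0,0}=\cdots=h_{p-1,0}$, which is polyhedral, so the statement must be read with $d\ge 1$.
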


\begin{proof}
We have already shown that $C_N$ is polyhedral for $d=1$ and $d=2$. We will next prove that for $d\geq 3$, $C_N$ is not polyhedral. 
We will show that each of the inequalities of the form $L_h(n+1) \geq L_h(n)$ for $n \geq N$ is necessary to describe $C_N$.
Specifically, for every choice of $k \in \ZZ_{\geq 0}$ and $m \in [0,p-1]$, we will produce an $h$ such that $L_h(n+1) \geq L_h(n)$ for all values of $n$ except for $n = kp + m$.

Suppose $f_m(t) = t^2 - 2kt +k^2 -1 = (t-k-1)(t-k+1)$ and $f_r(t) = 0$ for all $r \neq m$. We know there exists $\alpha_{r,j}$ such that
\[f_r(t) = \sum_{j=0}^d \alpha_{r,j} \binom{t+d-j}{d}.\]
Then by Lemma \ref{l:sum_alpha}, since $d \geq 3$ and each difference polynomial is degree at most 2, we have that the coefficients on the degree $d$ term is 0, hence $\alpha_{r,0} + \alpha_{r,1} + \dots + \alpha_{r,d} = 0$. Therefore, by Lemma \ref{l:specified_difference}, there exists $h \in \RR^{p(d+1)}$ such that 
\[L_h(tp+r+1) - L_h(tp+r) = \begin{cases}
	t^2 - 2kt + k^2 -1 & r = m\\
	0 & \text{otherwise}.
\end{cases}\]
Note that $L_h(kp+ m + 1) - L_h(kp+m) < 0$ if and only if $t = k$ and $r = m$. Therefore, there is no finite collection of halfspaces that will define $C_N$.
\end{proof}

\section{Nonnegative $h$-vectors in $\overline{\CEND}$}
\label{sec:nondecreasingend}

Many interesting examples of quasi-polynomials arise in commutative algebra and combinatorics, often in the context of $h$-vectors for Hilbert series of graded Cohen-Macaulay algebras equipped with non-standard $\ZZ$-gradings.
In such cases, all coefficients of the $h$-vector are nonnegative~\cite{brunsherzog}.
This motivates our study of the space of nonnegative $h$-vectors in $\CEND$ and $\overline{\CEND}$.
To do so, we will consider the rational polytope
\[
S_1 :=  \overline{\CEND} \cap \left\{h \in \RR_{\geq 0}^{p(d+1)} : \sum_{j=0}^d h_{0,j} = 1\right\} 
\]
that is obtained by restricting the cone $ \overline{\CEND}\cap \RR_{\geq 0}^{p(d+1)}$ to the hyperplane $\sum_{j=0}^d h_{0,j} = 1$, which by the volume equalities implies that $\sum_{j=0}^d h_{i,j} = 1$ for all $i$.
We will focus our attention on describing the rays of $\overline{\CEND}\cap \RR_{\geq 0}^{p(d+1)}$ and determining which of these rays are contained in $\CEND$. 
Note that the rays of $ \overline{\CEND}\cap \RR_{\geq 0}^{p(d+1)}$ are generated by the vertices of $S_1$.

Since $S_1$ is the set of all nonnegative $h \in \overline{\CEND}$ such that the coordinate sum of each mod class (i.e. the volume of each mod class) is 1, we are naturally led to consider the following definition.

\begin{defn}\label{d:CM}
	Given $h \in \overline{\CEND}$ and $r \in [0,p-1]$, let $CM_r(h)$ denote the quantity
	\[CM_r(h) := \frac{\sum_{j = 0}^d j  h_{r,j}}{V},\]
	where $V = \sum_{j = 0}^d h_{0,j}$.
	We call $CM_r(h)$ the \emph{center-of-mass} of the $r$-th mod class of $h$.
	We denote the vector $CM(h) := (CM_0(h), CM_1(h), \dots, CM_{p-1}(h))$ and we call $CM(h)$ the \emph{center-of-mass vector} of $h$.
	When it is clear, we will abbreviate $CM_r$ and $CM$.
\end{defn}

The intuition behind this naming convention is that when the volume is $1$, the quantity in the first-order inequalities is the center-of-mass of a discrete system with total mass 1 where masses $h_{r,j}$ are located at positive integer distances from the origin. 
Note that the total mass of the system being 1 is captured by restricting ourselves to consider points in $S_1$.
Thus, we can write the first-level inequalities (when restricted to $S_1$) as
\begin{equation}\label{eq:first_order_CM}
	-1 + CM_0 \leq CM_{p-1} \leq CM_{p-2} \leq \dots \leq CM_1 \leq CM_0.
\end{equation}
As a matter of fact, many of the arguments through this section can be generalized to work for a point $h \in \overline{\CEND}$, rather than restricting to $S_1$. However, since $\overline{\CEND}$ is a cone, we can scale any point $h \in \overline{\CEND}$ such that the scaled point lies in $S_1$, and in addition, the volume of every point in $S_1$ being 1 allows us to simplify  much of the algebra, primarily from dropping the denominator in any computation regarding $CM_r$.

\begin{remark}\label{rem:cmprobability}
Another way to interpret the value $CM_r$ is as an expected value, as described in Remark~\ref{rem:expectation}.
Specifically, we know that in $S_1$ the sum of the $h_{r,j}$ for a fixed mod class is equal to $1$, and the values are all non-negative. 
Thus, each mod class encodes a probability distribution on $\{0,1,\ldots,d\}$. 
The expression for $CM_r$ is precisely the expected value of this distribution, and the first-level inequalities are bounds on the expected values for the $p$ probability distributions contained in $h$.
Finite probability distributions arising in this form have been the recent subject of study in Ehrhart theory~\cite{ehrhartdistributions}.
\end{remark}

Our main result of this section is the following characterization of the elements of $S_1$ that are vertices.

\begin{thm}\label{t:S1_vertex}
	Let $h \in S_1$. Then $h$ is a vertex of $S_1$ if and only if the following are true:
	\begin{enumerate}
		\item[(1)] $\max(h) = 1$,
		\item[(2)] there are at most two nonzero entries of $h$ in each mod class, and
		\item[(3)] $CM_r \in \ZZ$ for all $r \in [0,p-1]$
	\end{enumerate}
\end{thm}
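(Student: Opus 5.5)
The plan is to exploit the structure of $S_1$ described in Remark~\ref{rem:cmprobability}. A point $h\in S_1$ is a $p$-tuple of probability distributions $h_{r,*}$ on $\{0,1,\dots,d\}$, and the only coupling between mod classes is the chain~\eqref{eq:first_order_CM} on the centers of mass. I will use the standard characterization of vertices: $h$ is a vertex of $S_1$ if and only if there is no nonzero $v$ with $h\pm \varepsilon v\in S_1$ for small $\varepsilon>0$. Any such $v$ has three properties:
\begin{itemize}
\item it is supported on the support of $h$, because of nonnegativity;
\item it has coordinate sum $0$ in each mod class, because of the volume equalities;
\item the induced changes $\delta_r:=\sum_j j\,v_{r,j}$ in the centers of mass must respect every first-level inequality that is tight at $h$.
\end{itemize}
Tight constraints in~\eqref{eq:first_order_CM} force the corresponding $\delta$'s to be equal, since both signs of the perturbation must be allowed.

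\emph{Necessity.} I prove each of (1)--(3) by exhibiting a perturbation when it fails.
\begin{itemize}
\item \emph{Condition (2).} If some class $r$ has at least three nonzero entries, then the two linear conditions ``sum $0$'' and ``$\sum_j j v_{r,j}=0$'' on those coordinates have a nonzero solution. Taking $v$ supported there changes no center of mass, so $h$ is not a vertex.
\item \emph{Condition (3).} Suppose some $CM_r\notin\ZZ$. Form the set $R$ of classes linked to $r$ by tight constraints of~\eqref{eq:first_order_CM}, including the wraparound constraint $CM_{p-1}\ge CM_0-1$. Tight constraints equate values up to integer shifts, so every class in $R$ has a non-integer center of mass and hence at least two support points. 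Within each class of $R$, I move a little mass between two support points so that every $CM_s$, $s\in R$, shifts by the same $\pm\varepsilon$. Tight constraints inside $R$ are preserved, and the constraints leaving $R$ are slack, so $h$ is not a vertex.
\item \emph{Condition (1).} Assume (2) and (3) hold but $\max(h)<1$. Then no class is a point mass, so every class has exactly two support points, one on each side of its mean. Shifting all centers of mass simultaneously by $\pm\varepsilon$ preserves every difference appearing in~\eqref{eq:first_order_CM}. This again contradicts being a vertex.
\end{itemize}

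\emph{Sufficiency.} Assume (1)--(3) and let $v$ be as above. The first step is to analyze the integer chain $CM_0\ge CM_1\ge\cdots\ge CM_{p-1}\ge CM_0-1$. Since the total drop is at most $1$, either all $CM_r$ are equal, or there is exactly one strict step (a drop of $1$) and all other constraints, including the wraparound one, are tight. In both cases at least $p-1$ of the $p$ cyclic constraints are tight, and these tight constraints connect all $p$ classes in a path. Hence all $\delta_r$ are equal. By (1) some class is a point mass, so $v$ vanishes on that class and its $\delta$ is $0$; therefore $\delta_r=0$ for every $r$. Finally, by (2) each class has support of size at most two, and within that support the conditions ``sum $0$'' and ``first moment $0$'' have only the trivial solution. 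So $v=0$ and $h$ is a vertex.

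The step I expect to need the most care is the necessity of (3): making precise the maximal tied set $R$, in particular the wraparound tie between $CM_{p-1}$ and $CM_0-1$, and checking that every constraint leaving $R$ is strictly slack at $h$ so that the common shift stays feasible.
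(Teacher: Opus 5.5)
Your proof is correct. It matches the paper in the sufficiency direction and takes a genuinely different route for necessity.

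\textbf{Sufficiency.} Your argument is the infinitesimal form of the paper's. The paper lists $p(d+1)$ constraints that are tight at $v$ and shows that any $w\in S_1$ satisfying them with equality is $v$. It anchors $CM_m(w)=k$ at the point-mass class, propagates along the $p-1$ tight first-level equalities (Lemma~\ref{l:integer_CM}), and finishes with Lemma~\ref{l:determined_h}. That is exactly your sequence: all $\delta_r$ equal, $\delta_m=0$, then $v=0$ on supports of size at most two.

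\textbf{Necessity.} The paper uses three separate devices:
\begin{enumerate}
\item For (1), Lemma~\ref{l:max} counts tight inequalities, at most $pd-1$, against $\dim S_1=pd$. This requires a separate computation of the dimension of $S_1$ via an interior point.
\item For (2), Proposition~\ref{p:2nonzero_decomp} gives an explicit two-term convex decomposition $h=\lambda x+(1-\lambda)y$.
\item For (3), Proposition~\ref{p:nonintegerCM} first reduces to the case where (2) holds. It then uses the rounded vectors $\widetilde{CM}^{(i)}$ of Lemma~\ref{l:rounding_noninteger_CM} and solves a triangular linear system, writing $h$ as a convex combination of points with integer center-of-mass vectors.
\end{enumerate}
You replace all three with two-sided perturbations. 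The real simplification is in (3): a common shift of the centers of mass over the tight component $R$ needs neither the rounding lemma nor the reduction to condition (2). Your check that constraints leaving $R$ are slack by maximality, with the case $R=[0,p-1]$ covered because a common shift preserves every difference, is what makes this work. Your argument for (1) does not actually use (2) or (3), just as Lemma~\ref{l:max} does not.

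\textbf{What each route buys.} The paper's heavier route is constructive. It expresses a non-vertex explicitly as a convex combination of points of $S_1$ with integer centers of mass and at most two support points per class. This is the starting point for the vertex-decomposition procedure illustrated in Example~\ref{ex:runningexample}. Your local perturbations certify that $h$ is not extreme but give no such decomposition. In exchange they are shorter and uniform across the three conditions.
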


The structure of our proof of Theorem~\ref{t:S1_vertex} is as follows: in this subsection, we will prove two lemmas and show that the reverse implication holds.
The forward direction of the proof will be obtained through three arguments by contrapositive, one for each of the three conditions in the characterization.
Because the arguments by contrapositive are longer and more technical, we defer those to Section~\ref{sec:proofs}.

The following lemma establishes a key relationship between a point in $S_1$ and its center-of-mass vector.
We first observe that, much like how at least one mass in a balanced system must be on each side of the center-of-mass, if there are multiple nonzero entries of $h$ in the $r$-th mod class, there must be one nonzero entry on each side of $CM_r(h)$. 
Suppose $r \in [0,p-1]$ and $h \in \RR^{p(d+1)}_{\geq 0}$ and let us define $\alpha$ and $\omega$ to be the smallest and largest values, respectively, such that $h_{r,\alpha} \neq 0$ and $h_{r,\omega} \neq 0$. 
Then if there are multiple nonzero entries, then $\alpha \neq \omega$ and since the nonnegativity of $h_{r,j}$ implies
\[CM_r = \sum_{j = \alpha}^\omega j h_{r,j} > \sum_{j= \alpha}^\omega \alpha h_{r,j} = \alpha 
\quad \text{and} \quad
CM_r = \sum_{j = \alpha}^\omega j h_{r,j} < \sum_{j= \alpha}^\omega \omega h_{r,j} = \omega \, ,
\]
we have
\begin{equation}\label{eq:CM_balance}
	\alpha < CM_r < \omega.
\end{equation}

\begin{lemma}\label{l:determined_h}
If $v \in S_1$ has at most two nonzero entries in each mod class, then the values $CM_r(v)$ uniquely determine $v$.
\end{lemma}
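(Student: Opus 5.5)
The proof is a mod-class-by-mod-class linear solve. Membership in $S_1$ forces every mod class of $v$ to have coordinate sum $1$, because the volume equalities hold and $\sum_j v_{0,j}=1$. So $V=1$ and $CM_r(v)=\sum_j j\,v_{r,j}$. The statement should be read relative to the support of $v$, i.e.\ the positions of its nonzero entries. With the support not fixed it is false: for $d=2$, $p=1$, both $(0,1,0)$ and $(\tfrac12,0,\tfrac12)$ lie in $S_1$ and have $CM_0=1$. The claim I will prove is therefore that, among vectors in $S_1$ with a given support of at most two entries per mod class, the center-of-mass vector determines $v$ uniquely.

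Fix $r\in[0,p-1]$ and treat the $r$-th mod class separately. Since the coordinate sum is $1$, there is at least one nonzero entry, which leaves two cases.

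\emph{One nonzero entry.} If it sits at position $\alpha$, then $v_{r,\alpha}=1$ and every other entry of the class is $0$. In this case $CM_r(v)=\alpha$, which is consistent with the prescribed data.

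\emph{Two nonzero entries.} Let them sit at positions $\alpha<\omega$. The two entries satisfy the linear system
\[
v_{r,\alpha}+v_{r,\omega}=1,\qquad \alpha v_{r,\alpha}+\omega v_{r,\omega}=CM_r(v).
\]
Its determinant is $\omega-\alpha\neq 0$, so the system has the unique solution
\[
v_{r,\omega}=\frac{CM_r(v)-\alpha}{\omega-\alpha},\qquad v_{r,\alpha}=\frac{\omega-CM_r(v)}{\omega-\alpha}.
\]
Both values are positive, as they must be, by~\eqref{eq:CM_balance}, which gives $\alpha<CM_r<\omega$. All other entries of the class are $0$.

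Since the mod classes involve disjoint coordinates, solving each class independently determines all of $v$. There is no genuine obstacle here; the only point requiring care is the one noted at the start, namely that the support must be part of the given data.
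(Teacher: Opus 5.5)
Your argument is correct and is essentially the paper's proof: you handle each mod class separately, observe that a lone nonzero entry must equal $1$, and otherwise solve the nonsingular $2\times 2$ system $v_{r,\alpha}+v_{r,\omega}=1$, $\alpha v_{r,\alpha}+\omega v_{r,\omega}=CM_r(v)$ to recover~\eqref{eq:determined_h}. Your caveat is also well taken: the paper's proof likewise treats the positions $\alpha,\omega$ as given, and your example $(0,1,0)$, $(\tfrac12,0,\tfrac12)$ with $d=2$, $p=1$ shows the literal statement needs the support as data; the same nonsingular solve even gives uniqueness among vectors whose support is merely contained in the prescribed positions, which is the form actually used in the proof of Theorem~\ref{t:S1_vertex}.
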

\begin{proof}
	We will consider $CM_r$ for a fixed $r$; extend this construction for each mod class to fully determine $v$.
	If $v$ has only one nonzero entry in the $r$-th mod class, then the volume equations force that entry to have a value of $1$.
	Since $CM_r(v)$ is given, it must be the case that $CM_r(v) \in \ZZ$ and $h_{r, CM_r(v)} = 1$.
	
	Otherwise, $v$ has two nonzero entries at $v_{r,\alpha}$ and $v_{r, \omega}$. 
	The vector $v$ must satisfy the equations
	\[\sum_{j=0}^d v_{r,j} = 1 \quad \text{and} \quad \sum_{j=0}^d jv_{r,j} = CM_r(v)\]
	which reduce to a system of two equations in two unknowns, with unique solution
	\begin{equation}\label{eq:determined_h}
		v_{r,j} = \begin{cases}
			\dfrac{\omega - CM_r}{\omega - \alpha} & j = \alpha\\[1em]
			\dfrac{CM_r - \alpha}{\omega - \alpha} & j = \omega\\[1em]
			0 & \text{otherwise} 
		\end{cases}\, .
	\end{equation}
\end{proof}

We next provide a useful result regarding the general form $CM(h)$ will take if $h \in \overline{\CEND}$ and $CM(h) \in \ZZ^p$.

\begin{lemma}\label{l:integer_CM}
	Let $h \in S_1$ such that $CM(h) \in \ZZ^p$. Then there exists some integer $k \in \ZZ$ such that
	\[CM(h) = (k+1,k+1,\dots,k+1, k,k,\dots,k).\]
	We define the \emph{offset index}, denoted $s$, to be the unique index such that $CM_s = k+1$ and $CM_{s+1} = k$. If no such index exists, i.e. $CM$ is a constant vector, we say $s = p-1$.
\end{lemma}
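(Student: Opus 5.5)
The plan is to read the conclusion directly off the first-level inequalities in the center-of-mass form \eqref{eq:first_order_CM}, namely
\[
-1 + CM_0 \le CM_{p-1} \le CM_{p-2} \le \dots \le CM_1 \le CM_0 .
\]
These hold for any $h \in S_1$ by Corollary~\ref{c:endclosure}, since $h\in\overline{\CEND}$ and the volume is $1$. I would set $k+1 := CM_0$, which is an integer by hypothesis. The chain then shows that every $CM_r$ lies in the closed interval $[k, k+1]$. Since each $CM_r$ is an integer, each $CM_r \in \{k, k+1\}$.

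Next, the chain also says that the sequence $CM_0, CM_1, \dots, CM_{p-1}$ is weakly decreasing. A weakly decreasing sequence taking values in $\{k,k+1\}$ and starting at $CM_0 = k+1$ must consist of an initial block of entries equal to $k+1$ followed by a (possibly empty) block of entries equal to $k$. This is exactly the claimed form
\[
CM(h) = (k+1,\dots,k+1,k,\dots,k).
\]

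For the offset index, consider two cases. If the block of $k$'s is nonempty, there is a unique position $s$ with $CM_s = k+1$ and $CM_{s+1} = k$; uniqueness follows from monotonicity. If that block is empty, then $CM$ is constant and the convention $s=p-1$ applies. So the definition is well posed.

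There is no real obstacle here. The only subtle point is the choice of $k$. I would anchor it at $CM_0$ rather than at the minimum entry, because the wraparound inequality $-1+CM_0 \le CM_{p-1}$ is stated relative to $CM_0$. With that choice, the constant-vector case automatically has all entries equal to $k+1$, which agrees with $s=p-1$.
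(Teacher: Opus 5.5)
Your proof is correct, and it takes a more direct route than the paper's.

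The paper argues locally. From the chain it deduces that each consecutive step $CM_{r-1}-CM_r$ is $0$ or $1$. It then takes $s$ to be the largest index where a drop occurs and shows that the entries after $s$ are constant. Finally, it rules out a second drop at some $s'<s$ by a contradiction that runs around the whole chain: $CM_{s'} \ge CM_{s+1}+2 \ge CM_{p-1}+2 \ge CM_0+1 \ge CM_{s'}+1$.

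You use the chain globally instead. The first-level inequalities place every $CM_r$ in the unit interval $[CM_0-1, CM_0]$. Integrality then leaves only two possible values, and weak monotonicity forces the block form. With this approach the uniqueness of the drop position is immediate, rather than requiring its own contradiction argument.

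Both proofs use exactly the same input, namely the first-level inequalities in center-of-mass form. Yours is shorter. The paper's step-by-step version has no real advantage here beyond matching the style of the floor-based uniqueness argument it gives later in Lemma~\ref{l:rounding_noninteger_CM}; your interval observation would streamline that argument as well.

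Your choice $k = CM_0 - 1$ also agrees with how the paper later uses the lemma. In the constant case, $s = p-1$ and all $s+1 = p$ entries equal $k+1$.
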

\begin{proof}
	Since $h \in S_1$, $h$ satisfies the first-level inequalities, and so we have the following string of inequalities,
	\[CM_0 \geq CM_1 \geq CM_2 \geq \dots \geq CM_{p-1} \geq CM_0 - 1.\]
	Thus, we have that $CM_{r-1} \geq CM_{r} \geq CM_{r-1} - 1$ for all $r > 0$, and so $CM \in \ZZ^p$ then implies either $CM_{r-1} = CM_{r}$ or $CM_{r-1} = CM_r + 1$. Let $s$ be the largest index such that $CM_{s} = CM_{s+1} + 1$. If no such index exists, then $CM$ is a constant vector, and we set $s = p-1$. Otherwise, $s$ exists, and for all $r > s$, we have 
	\[CM_{s+1} \geq CM_{r} \geq CM_{s} - 1 = CM_{s+1},\]
	thus $CM_r = CM_{s+1}$ for all $r > s$. Suppose there exists $s' < s$ such that $CM_{s'} = CM_{s'+1} + 1$. Then from the first-level inequalities, we would have 
	\[CM_{s'} = CM_{s' + 1} +1 \geq CM_{s} + 1 = CM_{s+1} + 2 \geq CM_{p-1} + 2 \geq CM_{0}+ 1 \geq CM_{s'} + 1, \]
	which is a contradiction. Therefore, we have that $CM_r = CM_{s}$ for all $r \geq s$, and hence $CM(h)$ is of the desired form.
\end{proof}

\begin{proof}[Proof of Theorem~\ref{t:S1_vertex}]
	We first consider the reverse direction.
	Thus, let $v \in S_1$ and assume that conditions (1), (2), and (3) hold.
	Note that by conditions (1) and (2), there exists $m \in [0,p-1]$ and $k \in [0,d]$ such that $v_{m,k} = 1$ and $v_{m,j}=0$ for all $j\neq k$, and hence we have that $CM_m(v) = k$ and $v$ has at most $2p - 1$ nonzero entries. 
	Therefore, $v$ has at least $dp - p + 1$ entries with a value of 0. Since $CM_r(v)\in\ZZ$ for all $r$, the first-level inequalities yield that for all $r$, either $CM_r(v) = CM_0(v)$ or $CM_r(v) = CM_0(v) - 1$.
	Thus, all but one of the first-level inequalities will be satisfied with equality. 
	This implies that $v$ satisfies at least $p(d+1)$ facet-defining inequalities with equality, specifically, the $p$ volume equations of the form 
	\[
	\sum_{j=0}^d v_{r,j} = 1 \, ,
	\]
	the $dp - p + 1$ equations of the form $v_i = 0$, and $p-1$ equations either of the form $CM_r(v) = CM_{r+1}(v)$ or of the form $CM_{p-1}(v) = CM_{0}(v) - 1$.
	
	Now, suppose that a point $w\in S_1$ satisfies the same collection of facet-defining inequalities as $v$.
	Then $w_{m,j} =0$ for all $j\neq k$, which implies that $w_{m,k}=1$ and hence $CM_m(w) = k = CM_m(v)$.
	Since $v \in S_1 \subseteq \overline{\CEND}$, by Lemma \ref{l:integer_CM}, let $s$ be the offset index. Note that if $s \neq 0$, then $CM_{r-1}(v) = CM_{r}(v)$ for all positive $r \neq s$ and $CM_0(v) = CM_{p-1}(v) + 1$ and if $s = 0$, then $CM(v)$ is a constant vector.
	
	Thus, if $s \neq 0$, we have that $CM_{s-1}(v) \geq CM_s(v)$ is the only inequality among the first-level that is satisfied by $v$ with strict inequality, and if $s = 0$, then it is only $CM_{p-1}(v) \geq CM_{0}(v) - 1$ that is satisfied by $v$ with strict inequality. Therefore, since we know $CM_m(w) = CM_m(v)$ and $v$ and $w$ share the same offset index, we can use Lemma \ref{l:integer_CM} to show that $CM(w) = CM(v)$.
	Therefore, Lemma \ref{l:determined_h} implies that $w$ is unique, and hence $w=v$.
	Thus, $v$ is a vertex of $S_1$.
	
	The forward direction is proved by three contrapositive arguments, one for each condition in the theorem.
	Applying Lemma~\ref{l:max}, Proposition~\ref{p:2nonzero_decomp}, and Proposition~\ref{p:nonintegerCM} completes the proof of the forward direction.
\end{proof}

\begin{remark}
	Note that Theorem \ref{t:S1_vertex} and \eqref{eq:determined_h} provide a method to produce all the vertices of $S_1$. 
	Specifically, we know that some vertex $h$ of $S_1$ must have an entry equal to $1$ and we also know that we can partition the vertices by their $CM$ vector.
	Thus, we first fix some $CM$ vector of an appropriate form, given by Lemma \ref{l:integer_CM} and then, we considers all possible subcases arising from a choice of which mod classes will have two nonzero entries and where those two non-zero entries will be. Note that by condition (1) in Theorem \ref{t:S1_vertex}, we must select at least one mod class to have only one nonzero entry.
	Each of the resulting cases will result in a vertex, and every vertex arises via this process.
\end{remark}

\begin{example}\label{ex:vertexmethod}
	Let $d = 4$ and $p = 3$. 
	Consider the case $h_{1,2} = 1$, which forces $CM_1=2$.
	By the first-level inequalities, we have that 
	\[
	CM_0 \geq 2 \geq CM_2 \geq CM_0 -1 \, .
	\]
	Thus, $CM_0$ can be either $2$ or $3$.
	If $CM_0=2$, then $CM_2$ can be either $2$ or $1$.
	Alternatively, if $CM_0 = 3$, then $CM_2 = 2$; consider this subcase.
	For $r = 0$, if there are two nonzero entries, then the right nonzero entry must be at $h_{0,4}$ and either $h_{0,0}$ or $h_{0,1}$ can be the left nonzero entry. 
	Suppose that $h_{0,1}, h_{0,4}, h_{2,0}, h_{2,4}$ are the nonzero entries of $h$. 
	Then from (\ref{eq:determined_h}), we can compute that
	\[h = \begin{bmatrix}
		0 &  0  & \frac{2}{4}&
		\frac{1}{3} & 0 & 0 &
		0 & \underline{1} & \underline{0} & 
		\underline{0} & 0 & 0 &
		\frac{2}{3} & 0 & \frac{2}{4}
	\end{bmatrix}\]
	The entries at $h_{r, CM_r}$ are underlined, and we have deliberately expressed some of the coordinates as $\tfrac{2}{4}$ rather than the simplified $\tfrac{1}{2}$.
	Notice that each mod class is balanced about the underlined entry and the denominators are equal to the distance, i.e., $\omega - \alpha$, between the nonzero entries. 
	In general, vertices of $S_1$ take on this form where, in each mod class, if there are two nonzero entries, then they appear with denominators equal to the distance between them.
	To find all vertices in this case, one would complete the case-by-case analysis for all possible choices in this search process.
\end{example}

\section{When is a vertex of $S_1$ in $\CEND$?}\label{sec:sufficiency}

We can use Theorem \ref{t:S1_vertex} to classify the vertices of $S_1$.
However, for $d \geq 3$, by Theorem \ref{thm:cnpolyhedral}, we know that $\CEND \subsetneq \overline{\CEND}$. 
Thus, some of the vertices of $S_1$ may not belong to $\CEND$. 
In this section, we would like to identify simple properties of a vertex of $S_1$ that would determine whether or not it is in $\CEND$.
 
We know from the definition of $S_1$ that $A_r^{(0)} = 0$ and $B^{(0)} = 0$ for all points in $S_1$.
By the first-level inequalities, we have $A_r^{(1)} \geq 0 $ and $B^{(1)} \geq 0$ for all points in $S_1$. 
Our goal is to find various sufficient conditions on $h$ to establish that either $A_r^{(2)} < 0$ or $B^{(2)} < 0$ in the case of equality in one of the preceding equations.
As we also know that there are at most two nonzero entries in each mod class for any vertex of $S_1$, the following lemma will aide in computations.

\begin{lemma}\label{l:A2_computation}
	Let $r \in [0,p-1]$ be fixed. Suppose that $h$ has two nonzero entries in the $r$-th mod class, namely $h_{r,\alpha}$ and $h_{r,\omega}$. Then 
	\[\sum_{j=0}^d (-j)^2 h_{r,j} = (\alpha+\omega)CM_r - \alpha\omega.\]
\end{lemma}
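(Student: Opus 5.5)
The identity is a short computation from the two moment equations that $h_{r,*}$ satisfies, with no appeal to the earlier structural results. I will work in the setting of $S_1$, as the surrounding section does, so that the volume of $h$ is $1$. Then the two nonzero entries satisfy
\[
h_{r,\alpha}+h_{r,\omega}=1
\qquad\text{and}\qquad
\alpha h_{r,\alpha}+\omega h_{r,\omega}=CM_r .
\]
Because all other entries of the $r$-th mod class vanish, the left-hand side of the claimed identity is
\[
\sum_{j=0}^d j^2 h_{r,j}=\alpha^2 h_{r,\alpha}+\omega^2 h_{r,\omega},
\]
since $(-j)^2=j^2$.

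The key step is to write each entry $h_{r,j}$ with $j\in\{\alpha,\omega\}$ as $(\alpha+\omega)j-\alpha\omega$. Because $(j-\alpha)(j-\omega)=0$ for $j\in\{\alpha,\omega\}$, we have $j^2=(\alpha+\omega)j-\alpha\omega$ at both of these points. Summing against the weights then gives
\[
(\alpha+\omega)\bigl(\alpha h_{r,\alpha}+\omega h_{r,\omega}\bigr)-\alpha\omega\bigl(h_{r,\alpha}+h_{r,\omega}\bigr)=(\alpha+\omega)CM_r-\alpha\omega .
\]
An equivalent route is to substitute the explicit values from \eqref{eq:determined_h} in Lemma~\ref{l:determined_h} and simplify. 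The polynomial-identity route avoids the denominators $\omega-\alpha$, so I will use it.

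There is no real obstacle here. The only point needing care is the normalization: if $h$ is not in $S_1$ but has volume $V$, the same argument gives $V\bigl((\alpha+\omega)CM_r-\alpha\omega\bigr)$. I would therefore state explicitly that $h$ is taken in $S_1$ (or scaled into it), matching the convention adopted just after \eqref{eq:first_order_CM}.
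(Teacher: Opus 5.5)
Your proof is correct. It takes a slightly different route from the paper's.

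The paper substitutes the explicit values
\[
h_{r,\alpha}=\frac{\omega-CM_r}{\omega-\alpha}, \qquad h_{r,\omega}=\frac{CM_r-\alpha}{\omega-\alpha}
\]
from \eqref{eq:determined_h} into $\alpha^2 h_{r,\alpha}+\omega^2 h_{r,\omega}$ and simplifies. You never solve that $2\times 2$ system. Instead you use that $j^2$ and $(\alpha+\omega)j-\alpha\omega$ agree on the support $\{\alpha,\omega\}$, so the second moment is the same combination of the zeroth and first moments.

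The paper's route reuses a formula it has just derived. Yours avoids the denominators $\omega-\alpha$ and needs only that the support of $h_{r,*}$ lies in $\{\alpha,\omega\}$, not that $\alpha\ne\omega$ or that both entries are nonzero. It also makes the volume-$V$ version $V\bigl((\alpha+\omega)CM_r-\alpha\omega\bigr)$ immediate. Your remark that the statement implicitly takes $h\in S_1$, as Lemma~\ref{l:determined_h} does, is accurate.

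One wording slip: you say you will ``write each entry $h_{r,j}$ \dots{} as $(\alpha+\omega)j-\alpha\omega$.'' It is $j^2$, not $h_{r,j}$, that gets rewritten, as your next sentence shows.
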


\begin{proof}
	From Lemma \ref{l:determined_h}, we know the values of $h_{r,\alpha}$ and $h_{r,\omega}$, and we can show that
	\[ \sum_{j=0}^d (-j)^2 h_{r,j} = (-\alpha)^2 \frac{\omega - CM_r}{\omega - \alpha} + (-\omega)^2 \frac{CM_r - \alpha}{\omega-\alpha} = (\alpha+\omega)CM_r - \alpha\omega.\]
\end{proof}

Here we present some simple reject criteria of whether a vertex $h$ of $S_1$ belongs in $\CEND$, based on consecutive entries on $h$. A fact we will use throughout the proofs is that for a vertex $h \in S_1$, if $h_i = 1$ and $r \equiv i \pmod p$, then $i = p \cdot CM_{r} + r$.

\begin{prop}\label{p:END_01_check}
	Let $h$ be a vertex of $S_1$ and let $s$ be the offset index of $CM(h)$, as defined in Lemma \ref{l:integer_CM}. If there exists an index $i$ such that $i \not\equiv s \pmod p$, $h_{i} = 0$, and $h_{i+1} = 1$, then $h \notin \CEND$.
\end{prop}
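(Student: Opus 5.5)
The plan is to work directly with the criteria of Theorem~\ref{t:endh}. Write the index as $i = jp + r$ with $r \in [0,p-1]$. Since $h$ lies in $S_1$, every $A^{(0)}_r(h)$ and $B^{(0)}(h)$ vanishes. The goal is to show that the relevant first-level quantity also vanishes while the second-level quantity is strictly negative, so that $h$ violates Theorem~\ref{t:endh}. I will split into two cases according to whether $i+1$ lies in the next mod class in the same block ($r \le p-2$) or wraps around to class $0$ ($r = p-1$).

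\emph{Case $r \le p-2$.} Here $h_{i+1} = h_{r+1,j} = 1$, so class $r+1$ is a point mass at $j$ and $CM_{r+1} = j$. By Lemma~\ref{l:integer_CM} the only position where consecutive centers of mass differ is at the offset index $s$. Since $r \ne s$, this gives $CM_r = CM_{r+1} = j$, and hence $A^{(1)}_r(h) = 0$.

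Next, class $r$ has center of mass $j$ but $h_{r,j} = 0$, so it cannot be a single point mass. By Theorem~\ref{t:S1_vertex}(2) it therefore has exactly two nonzero entries $h_{r,\alpha}$ and $h_{r,\omega}$, and \eqref{eq:CM_balance} gives $\alpha < j < \omega$. Lemma~\ref{l:A2_computation} then yields
\[
A^{(2)}_r(h) = j^2 - \bigl((\alpha+\omega)j - \alpha\omega\bigr) = -(j-\alpha)(\omega-j) < 0 .
\]
So the smallest $\ell$ with $A^{(\ell)}_r(h) \ne 0$ is $\ell = 2$, and that value is negative. Theorem~\ref{t:endh} then gives $h \notin \CEND$.

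\emph{Case $r = p-1$.} Now $i+1 = (j+1)p$, so $h_{0,j+1} = 1$ and $CM_0 = j+1$. Because $r = p-1 \ne s$, the offset index satisfies $s < p-1$, so the center-of-mass vector is not constant. The only drop is at $s$, and the wrap-around first-level inequality forces $CM_{p-1} = CM_0 - 1 = j$. Consequently
\[
B^{(1)}(h) = -CM_0 + (CM_{p-1} + 1) = 0 .
\]
Expanding $(j'+1)^2$ in the definition of $B^{(2)}$ and using the unit volume of class $p-1$, I expect
\[
B^{(2)}(h) = (j+1)^2 - \Bigl(\sum_{j'} j'^2 h_{p-1,j'} + 2j + 1\Bigr) = j^2 - \sum_{j'} j'^2 h_{p-1,j'} .
\]
As in the first case, $h_{p-1,j} = 0$ together with $CM_{p-1} = j$ forces two nonzero entries at positions $\alpha < j < \omega$. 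Lemma~\ref{l:A2_computation} again gives $B^{(2)}(h) = -(j-\alpha)(\omega-j) < 0$, so $h \notin \CEND$.

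The main point needing care is the bookkeeping of the offset-index convention. In the first case I need that $r \ne s$ really gives $CM_r = CM_{r+1}$. In the second case I need that $s \ne p-1$ means a genuine drop exists and that it sits at $s$ rather than at the wrap-around. Both follow from the form established in Lemma~\ref{l:integer_CM}. Beyond that, the only computation is the short expansion of $B^{(2)}$ displayed above.
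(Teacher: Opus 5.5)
Your proof is correct and follows the same route as the paper: split on whether $r\le p-2$ or $r=p-1$, use $r\neq s$ with Lemma~\ref{l:integer_CM} to get $CM_r=CM_{r+1}$ (respectively $CM_0=CM_{p-1}+1$), and apply Lemma~\ref{l:A2_computation} to show that the second-level quantity equals $(CM-\alpha)(CM-\omega)<0$. You are more careful than the written proof: you check explicitly that the first-level quantity vanishes and why the relevant class has exactly two nonzero entries, and your expansion $B^{(2)}(h)=j^2-\sum_{j'}j'^2h_{p-1,j'}=(j-\alpha)(j-\omega)$ is correct, whereas the paper's expansion carries a spurious extra term $-2CM_{p-1}+1$ that does not change the sign conclusion.
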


\begin{proof}
	First, let $i$ such that $h_{i} = 0$ and $h_{i+1} = 1$ and let $r \in [0,p-1]$ be such that $i \equiv r \pmod p$. By our assumption, we have that $r \neq s$.
	We would like to show that $A_{r}^{(2)} < 0$ (or $B^{(2)} < 0$ if $r = p-1$). 
	For now, we will assume that $r \neq p-1$.
	Since $r \neq s$, we have that $CM_{r} = CM_{r+1}$ by the definition of $s$. Then by using Lemma \ref{l:A2_computation}, we have 
	\begin{align*}
		A_{r}^{(2)} &= \sum_{j=0}^d (-j)^2 h_{r+1,j} - \sum_{j=0}^d (-j)^2 h_{r,j}\\
		&=  CM_{r}^2 -  (\alpha + \omega)CM_{r} + \alpha\omega\\
		&= (CM_{r}-\alpha)(CM_{r}-\omega).
	\end{align*}
	By equation (\ref{eq:CM_balance}), we know $\alpha < CM_r < \omega$, and so $A_{r}^{(2)} < 0$.
	
	Now, let $r = p-1$. Since $r \neq s$, we have $CM_0 = CM_{p-1} + 1$. Thus, we can similarly show that 
	\begin{align*}
		B^{(2)} &= \sum_{j=0}^d (-j)^2 h_{0,j} - \sum_{j=0}^d (-j-1)^2 h_{p-1,j}\\
		&= (CM_{p-1}-1)^2 - (\alpha + \omega)CM_{p-1} + \alpha\omega\\
		&= (CM_{p-1}-\alpha)(CM_{p-1}-\omega) - 2CM_{p-1} + 1
	\end{align*}
	Since $0 \leq \alpha < CM_{p-1}$, by equation (\ref{eq:CM_balance}), we have that $CM_{p-1}$ is at least $1$ and so $-2CM_{p-1} + 1 < 0$, hence $B^{(2)} < 0$.
	In both cases, we have that by Theorem \ref{t:endh}, $h$ cannot be in $\CEND$. 
\end{proof}

\begin{prop}\label{p:END_consec_nonint_check}
	Let $h$ be a vertex of $S_1$. If there exists an index $i$ such that $h_{i} > h_{i+1}$ and $h_{i}, h_{i+1} \notin \ZZ$, then $h \notin \CEND$.
\end{prop}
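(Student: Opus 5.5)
The plan is to compare the two mod classes that contain $h_i$ and $h_{i+1}$ through their second moments. Write $i = jp + r$. If $r \neq p-1$, then $h_i = h_{r,j}$ and $h_{i+1} = h_{r+1,j}$. If $r = p-1$, then $h_i = h_{p-1,j}$ and $h_{i+1} = h_{0,j+1}$. Since $h$ is a vertex, Theorem~\ref{t:S1_vertex} says each mod class has at most two nonzero entries. A class with a single nonzero entry has that entry equal to $1$, so a non-integer entry must lie in a class with exactly two nonzero entries. Those entries are given by \eqref{eq:determined_h} at positions $\alpha < CM < \omega$, and $CM$ is an integer by Theorem~\ref{t:S1_vertex}(3). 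In particular no non-integer entry sits at position $CM$.

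\emph{Case of equal centers of mass.} First take $r \neq p-1$ with $CM_r = CM_{r+1} =: c$. View the two classes as two-point probability distributions with the same mean $c$. Then $A^{(1)}_r = 0$, and by Lemma~\ref{l:A2_computation}
\[
A^{(2)}_r = \bigl((\alpha'+\omega')c - \alpha'\omega'\bigr) - \bigl((\alpha+\omega)c - \alpha\omega\bigr) = (c-\alpha')(\omega'-c) - (c-\alpha)(\omega-c),
\]
which is the difference of the variances. The shared position $j$ is a support point of both distributions and $j \ne c$. So $j$ is the left point of both (if $j < c$) or the right point of both (if $j > c$).

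Suppose $j = \alpha = \alpha'$. The weight at $j$ is $(\omega - c)/(\omega - j)$, which is strictly increasing in $\omega$. Hence $h_{r,j} > h_{r+1,j}$ forces $\omega > \omega'$, and then $(c-j)(\omega'-c) < (c-j)(\omega - c)$, so $A^{(2)}_r < 0$. The case $j$ on the right is symmetric. Theorem~\ref{t:endh} then gives $h \notin \CEND$.

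For $r = p-1$ with $CM_0 = CM_{p-1} + 1$, the same argument applies. Shift the distribution of class $p-1$ by $+1$, i.e.\ replace $X_{p-1}$ by $X_{p-1}+1$. Then $B^{(1)} = 0$, and $B^{(2)} = \mathbb{E}[X_0^2] - \mathbb{E}[(X_{p-1}+1)^2]$ is again a difference of variances of two two-point distributions with common mean. The shared support point is now $j+1$, and the monotonicity comparison yields $B^{(2)} < 0$.

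\emph{Offset case (main obstacle).} The remaining case is when the pair of classes straddles the offset index $s$ of Lemma~\ref{l:integer_CM}: $r = s \neq p-1$ with $CM_r = CM_{r+1} + 1$, or $r = p-1 = s$ with $CM$ constant. Here the relevant first-level inequality is strict, so the second-moment test at this pair gives nothing. I expect this to be the main obstacle.

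My first attempt will be to show the hypotheses cannot hold in this configuration. In the subcase $r = s \neq p-1$, the two classes have integer means $c+1$ and $c$ and share the position $j$, which is a support point of both. Since $j \ne c, c+1$, the point $j$ lies on the same side of both means. I will compare the weights from \eqref{eq:determined_h}, namely $(\omega-c-1)/(\omega-j)$ against $(\omega'-c)/(\omega'-j)$, or the left-side analogues, and test whether $h_{r,j} > h_{r+1,j}$ is even possible.

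If it is possible, I will look instead for a different adjacent pair of classes that satisfies the equal-center-of-mass case, and use the structure from Theorem~\ref{t:S1_vertex}(1). That theorem gives some class with a single entry $1$, and I would propagate the non-integer pattern from the offset pair to a neighbouring pair with equal centers of mass. If neither works, the proposition likely needs the extra hypothesis $i \not\equiv s \pmod p$, as in Proposition~\ref{p:END_01_check}.
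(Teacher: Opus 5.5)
Your handling of the pairs with equal centers of mass, including the shifted pair $(p-1,0)$, is correct and is essentially the paper's argument. Your explicit observation that the shared position lies on the same side of $c$ in both classes is a cleaner justification than the paper's ``without loss of generality.''

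The genuine gap is the offset case $i\equiv s \pmod p$, which you leave unresolved. Your first plan cannot work, because this configuration does occur. Take $d=p=3$ and the vertex $h=(0,\tfrac23,0,\tfrac12,0,1,0,0,0,\tfrac12,\tfrac13,0)$. It has $CM(h)=(2,1,1)$, so $s=0$, and $h_9=\tfrac12>h_{10}=\tfrac13$ with $9\equiv s$; moreover $i=9$ is the only index satisfying the hypotheses. Your closing suggestion that the extra hypothesis $i\not\equiv s$ is needed is also mistaken. Here $h\notin\CEND$ anyway, since $A^{(2)}_1=-2$. The failure of Theorem~\ref{t:endh} simply occurs at a different pair of classes than the one containing $h_i,h_{i+1}$.

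Your second plan points in the right direction, but the argument it needs is a comparison with a point mass, not a propagation of non-integer entries.
\begin{enumerate}
\item In the offset case, the class $s+1 \pmod p$ contains $h_{i+1}\notin\ZZ$, so it is a two-point class.
\item By condition~(1) of Theorem~\ref{t:S1_vertex}, some class is a point mass. Let $t$ be the first such class in the cyclic order $s+1,s+2,\ldots,p-1,0,\ldots,s$.
\item Then $t\neq s+1$, and the class $t-1 \pmod p$ is two-point with support $\alpha<\omega$.
\item Since $(t-1,t)$ is not the offset pair, the two means agree (after the shift by $1$ when $t=0$).
\item In your variance language, $A^{(2)}_{t-1}=-(CM_{t-1}-\alpha)(\omega-CM_{t-1})<0$. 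When $t=0$, instead $B^{(2)}=-(CM_{p-1}-\alpha)(\omega-CM_{p-1})<0$.
\item Either way, Theorem~\ref{t:endh} gives $h\notin\CEND$.
\end{enumerate}

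This is exactly the paper's route, phrased differently. The paper observes that the center positions $p\,CM_r+r$ ($0\le r<p$) form a block of $p$ consecutive indices. The first index of that block is the center of class $s+1 \pmod p$, which is $0$ in the offset case. Hence the first $1$ in the block is preceded by a $0$ at an index $\not\equiv s \pmod p$, and Proposition~\ref{p:END_01_check} applies.
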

\begin{proof}
	Let $i$ be such that $h_{i} > h_{i+1}$ and $h_{i}, h_{i+1} \notin \ZZ$ and let $r$ be such that $i \equiv r \pmod p$. 
	Suppose that $r = s$, where $s$ is given in Lemma \ref{l:integer_CM}. Then we have 
	\[CM_s = CM_{s+1} = \dots = CM_{p-1} = CM_0 - 1 = CM_{1} -1 = \dots = CM_{s-1} - 1. \] 
	Since there are two nonzero entries in the $(s+1)$-th mod class, it must be the case that $h_{s+1,CM_{s+1}} = 0$. However, this leads to a contradiction. Since $h$ is a vertex of $S_1$, there must be an entry of 1 in $h$ and let $j$ be the smallest index such that $h_j = 1$. By the minimality of $j$, we have that $h_{j-1} = 0$, and by Proposition \ref{p:END_01_check}, we have arrived at our contradiction. Therefore, we have that $r \neq s$.
	
	Since $h_{i}$ and $h_{i+1}$ are both not 1, there is another nonzero entry somewhere else in the $(r-1)$-th and $r$-th mod classes. Let $\alpha_{r}, \alpha_{r+1}, \omega_{r},$ and $\omega_{r+1}$ be the indices of these nonzero elements where $\alpha_{i} < \omega_{i}$ for all $i$. 
	Since we have two neighboring nonzero elements, without loss of generality, assume $\alpha_{r} = \alpha_{r+1}$ if $r \neq p-1$ and $\alpha_0 = \alpha{p-1} + 1$ otherwise.
 	For ease of notation, we will write $CM$ and $\alpha$.
	
	If $r \neq p-1$, then from the hypothesis that $h_{i} > h_{i+1}$ and Lemma \ref{l:determined_h}, we can obtain
	\begin{align*}
		\frac{\omega_{r} -CM}{\omega_{r} - \alpha} &> \frac{\omega_{r+1} -CM}{\omega_{r+1} - \alpha}\\
		(\omega_{r} - CM)(\omega_{r+1} - \alpha) &> (\omega_{r+1} - CM)(\omega_{r} - \alpha)\\
		-CM\omega_{r+1} - \alpha \omega_{r} &> -CM\omega_{r} - \alpha \omega_{r+1}\\
		0 &> (CM-\alpha)(\omega_{r+1} - \omega_{r})\\
	\end{align*}
	Then from Lemma \ref{l:A2_computation}, 
	\begin{align*}
		A_{r}^{(2)} &= \sum_{j=0}^d (-j)^2 h_{r+1,j} - \sum_{j=0}^d (-j)^2 h_{r,j}\\
		&= (\omega_{r+1} + \alpha) CM - \alpha\omega_{r+1} - (\omega_{r} + \alpha) CM + \alpha\omega_{r}\\
		&= (CM - \alpha)(\omega_{r+1} - \omega_{r}),
	\end{align*}
	and from the work directly above, we have that $A_{r}^{(2)} < 0$. Now, let $r = p-1$. Since $s \neq r$, we have $CM_0 = CM_{p-1} + 1$. Thus, from $h_{i} > h_{i+1}$ and Lemma \ref{l:determined_h}, we have 
	\begin{align*}
		\frac{\omega_{p-1} -CM_{p-1}}{\omega_{p-1} - \alpha_{p-1}} &> \frac{\omega_{0} -CM_{0}}{\omega_{0} - \alpha_0}\\
		\frac{\omega_{p-1} -CM_{p-1}}{\omega_{p-1} - \alpha_{p-1}} &> \frac{\omega_{0} - 1 - CM_{p-1}}{\omega_{0} - 1 - \alpha_{p-1}}\\
		0 &> (CM_{p-1}-\alpha_{p-1})(\omega_{0} - \omega_{p-1} - 1)\\
	\end{align*}
	
	Thus,
	\begin{align*}
		B^{(2)} &= \sum_{j=0}^d (-j)^2 h_{0,j} - \sum_{j=0}^d (-j-1)^2 h_{p-1,j}\\
		&= \sum_{j=0}^d (-j)^2 h_{0,j} - \sum_{j=0}^d (-j)^2 h_{p-1,j} - \sum_{j=0}^d 2jh_{p-1,j} - \sum_{j=0}^d h_{p-1,j}\\
		&= (\alpha_{0} + \omega_{0})CM_{0} - \alpha_{0}\omega_{0} - (\alpha_{p-1} + \omega_{p-1})CM_{p-1} + \alpha_{p-1}\omega_{p-1}  \\
		&\phantom{=} \quad - 2\alpha_{p-1}\frac{\omega_{p-1} - CM_{p-1}}{\omega_{p-1} - \alpha_{p-1}} - 2\omega_{p-1}\frac{CM_{p-1} - \alpha_{p-1}}{\omega_{p-1} - \alpha_{p-1}} - 1  \\
		&=(\alpha_{p-1} + \omega_0 + 1)(CM_{p-1} + 1) - (\alpha_{p-1} + 1)\omega_0 - (\alpha_{p-1} + \omega_{p-1})CM_{p-1}\\
		&\phantom{=} \quad + \alpha_{p-1}\omega_{p-1}- 2CM_{p-1} - 1\\
		&= (CM_{p-1} - \alpha_{p-1})(\omega_0 - \omega_{p-1} - 1).
	\end{align*}
	And so, $B^{(2)} < 0$, and therefore, in both cases, we have that $h \notin \CEND$, by Theorem \ref{t:endh}.
\end{proof}

\begin{example}
	Let $d=3$ and $p = 4$. Consider the following vertex of $S_1$, given by
	\[h = (0,\tfrac{2}{3},0,0,\tfrac{1}{2},0,0,1,0,0,1,0,\tfrac{1}{2},\tfrac{1}{3},0,0) \in S_1.\]
	One can compute $CM(h) = (2,2,2,1)$, so $s = 2$. This $h$-vector happens to violate both conditions in Proposition \ref{p:END_01_check} and Proposition \ref{p:END_consec_nonint_check}. 
	For the condition in Proposition \ref{p:END_01_check}, notice there are two pairs of 0 directly followed by 1, namely $(h_6, h_7)$ and $(h_9, h_{10})$. However, $6 \equiv s \pmod p $, and so $i = 9$ is the only index satisfies the conditions of Proposition \ref{p:END_01_check}.
	For the condition in Proposition \ref{p:END_consec_nonint_check}, notice that $h_{12} > h_{13}$ and both entries are nonintegers.
\end{example}

\begin{example}
	Suppose, similar to Example \ref{ex:general_single_decrease}, we want to locate $h$ with $d= 3$ and $p =4$ such that $L_h(18) > L_h(19)$ is the only decrease of $L_h$. However, we would like to restrict $h$ to be a nonnegative vector. We can still use much of Example \ref{ex:general_single_decrease} to construct such an $h$. For the following procedure, we require the single decrease to occur at $N = kp + m$  with $m \in [0,p-2]$, then we can take the construction from Theorem \ref{thm:cnpolyhedral} and construct a nonnegative $h$. The following construction will work for any $m \neq p-1$.

	We claim that 
	\[h = (0,0,0,15,52,52,52,0,0,0,0,61,24,24,24,0)\]
	is such that $L_h(n+1) \geq L_h(n)$ for all $n \neq 18$, and we can see that this is the case as the difference polynomials are
	\begin{align*}
		f_0(t) &= 0, & f_2(t) &= t^2 - 8t +15,\\
		f_1(t)&= 0, & f_3(t) &= 37t^2 + 74t + 37.
	\end{align*}
	
	The key observation is that adding a positive constant to each entry $h_{i,j}$ for a fixed $j$ preserves $f_r(t)$ for all $r \neq p-1$. Let $f'_r(t)$ denote the difference polynomial obtained by adding some positive constant $c$ to each entry $h_{r,k}$ for a fixed $k$. Then
	\begin{align*}
		f'_{r}(t) &=  (c - c) \binom{t+d-k}{d} + \sum_{j = 0}^{d} (h_{r+1,j} - h_{r,j}) \binom{t+d-j}{d} = f_r(t)  \text{ for } r \neq p-1\\
		f'_{p-1}(t) &= c\binom{t+d-k+1}{d} - c\binom{t+d-k}{d} + \sum_{j = 0}^d h_{0,j} \binom{t+d-j+1}{d} - h_{p-1,j} \binom{t+d-j}{d}\\
		&= c\binom{t+d-k}{d-1} + f_{p-1}(t)
	\end{align*} 
	The last line utilizes a binomial identity. It remains to show that if $f_{p-1}(t) \geq 0$, then $f'_{p-1}(t) \geq 0$ for all $t \in \ZZ_{\geq 0}$. This is the case as $k \leq d$, and so $\binom{t+d-k}{d-1} \geq 0$ for all $t$. Therefore, this process can take any $h$ such that $L_h$ experiences a single decrease in the $r$-th mod class with $r \neq p-1$ and construct a nonnegative vector $h'$ such that $L_{h'}$ experiences a single decrease in the same location as $L_h$.
\end{example}

\section{Growth rates for the number of nonnegative eventually nondecreasing quasi-polynomials}\label{sec:growth}

Our final goal in this work is to determine the growth rate for the number of eventually nondecreasing quasi-polynomials with non-negative integer $h$-vectors as a function of their volume.
Because $C_{\textnormal{END}}\cap \RR_{\geq 0}^{p(d+1)}$ is a union of finitely many relatively open polyhedral cones, the dominant growth term in our enumeration will come from the relative interior of $\overline{\CEND}$, i.e., from those eventually non-decreasing quasi-polynomials that strictly satisfy the first-level inequalities.
This makes sense from a topological perspective, as we can consider a vector $h\in C_{\textnormal{END}}\cap \RR_{\geq 0}^{p(d+1)}$ to be generic if and only if it is contained in the relative interior of $ \overline{\CEND}\cap \RR_{\geq 0}^{p(d+1)}$.
This motivates the following definition.

\begin{defn}\label{d:fixedvolume}
Fix positive integers $d$ and $p$.
We say that a vector $h\in C_{\textnormal{END}}$ is \emph{generic} if it satisfies the volume inequalities, strictly satisfies the first-level inequalities, and has all positive entries.
Let $\nd(d,p;V)$ denote the number of vectors of volume $V\in \ZZ_{\geq 0}$ in $C_{\textnormal{END}}\cap \ZZ_{\geq 0}^{p(d+1)}$, and let $\gnd(d,p;V)$ denote the number of generic vectors of volume $V\in \ZZ_{\geq 0}$ in $C_{\textnormal{END}}\cap \ZZ_{\geq 0}^{p(d+1)}$.
\end{defn}

Note that the values $\nd(d,p;V)$ and $\gnd(d,p;V)$ exists since there are only finitely many positive integer vectors with sum $V$.

\begin{thm}\label{thm:enumeration}
Fix positive integers $d$ and $p$.
The function $\gnd(d,p;V)$ is a quasi-polynomial of degree $pd$ and having period dividing $\lcm(1,2,\ldots,d)$, non-negative $h$-vector, and volume given by the normalized volume of $S_1$.
The function $\nd(d,p;V)$ is a quasi-polynomial of degree $pd$.
\end{thm}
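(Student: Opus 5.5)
The plan is to recast both counts as lattice-point counts in dilates of rational polytopes and then apply Ehrhart theory for rational polytopes. Let $P := \overline{\CEND}\cap\RR_{\ge 0}^{p(d+1)}$, and let $\Lambda := \ZZ^{p(d+1)}\cap W$, where $W$ is the linear subspace cut out by the volume equalities. The slice of $P$ at volume $V$ is exactly $V\cdot S_1$. By Corollary~\ref{c:endclosure}, $S_1$ is the rational polytope given by the volume equalities, the normalization $\sum_j h_{0,j}=1$, the first-level inequalities, and $h\ge 0$.

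\textbf{Step 1: $S_1$ has dimension $pd$ and its relative interior is the generic set.} I will exhibit a point of $S_1$ with all entries positive that satisfies every first-level inequality strictly. To build it, start from the uniform distribution $h_{r,j}=1/(d+1)$ in each mod class. Then perturb each class slightly so that its center of mass decreases strictly in $r$, with total spread less than $1$; this is possible for $d\ge 1$. Such a point shows that $S_1$ spans the full $pd$-dimensional affine space $\{h\in W:\sum_j h_{0,j}=1\}$. It follows that the relative interior of $V\cdot S_1$ is the set of points with all coordinates positive that strictly satisfy the first-level inequalities. By the proof of Corollary~\ref{c:endclosure}, every such point lies in $\CEND$. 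Hence, for $V\ge 1$, $\gnd(d,p;V)=\#\big(\Lambda\cap \mathrm{relint}(V S_1)\big)$. This identification is the step I expect to need the most care: full dimensionality, and the match between ``relative interior'' and ``generic'', are exactly what make Ehrhart reciprocity and the volume statement apply.

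\textbf{Step 2: apply Ehrhart theory to $\gnd$.} By Theorem~\ref{t:S1_vertex} and formula~\eqref{eq:determined_h}, every vertex of $S_1$ has coordinates in $\{0,1\}$ or of the form $a/(\omega-\alpha)$ with $1\le \omega-\alpha\le d$. The denominators therefore divide $\lcm(1,\ldots,d)$. I then work in $W$ with the lattice $\Lambda$, identifying the affine hyperplane of volume $1$ with a translate of a rank-$pd$ lattice. Ehrhart's theorem for rational polytopes, combined with Ehrhart--Macdonald reciprocity for the interior, then gives the following.
\begin{itemize}
\item $\gnd(d,p;V)$ is a quasi-polynomial of degree $\dim S_1=pd$.
\item Its period divides $\lcm(1,\ldots,d)$.
\item Its leading coefficient is the relative volume of $S_1$; this gives the normalized-volume claim.
\end{itemize}
For nonnegativity of the $h$-vector, I will invoke Stanley's nonnegativity theorem for rational polytopes (written over the denominator $(1-z^{q})^{pd+1}$ with $q=\lcm(1,\ldots,d)$), applied to the interior series via reciprocity. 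Equivalently, I can use the Cohen--Macaulay property of the canonical module of the semigroup ring of the cone over $S_1$.

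\textbf{Step 3: $\nd$.} By Corollary~\ref{cor:disjointend}, $\CEND$ is a finite disjoint union of relatively open polyhedral cones $C_{a_0,\dots,a_{p-2},b}$. I intersect each of these with the $3^{p(d+1)}$-fold refinement given by specifying, for each coordinate, whether $h_i=0$ or $h_i>0$. This writes $\CEND\cap\RR^{p(d+1)}_{\ge 0}$ as a finite disjoint union of relatively open rational polyhedral cones, each contained in $P$. Slicing each piece at volume $V$ gives a dilate of a relatively open rational polytope. Its lattice-point count is a quasi-polynomial in $V$: apply inclusion--exclusion over the faces of its closure, then Ehrhart's theorem for each face. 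A finite sum of quasi-polynomials is again a quasi-polynomial, so $\nd(d,p;V)$ is one.

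Every piece lies in $P$, whose slices have dimension $pd$, so the degree of $\nd$ is at most $pd$. Since $\gnd\le\nd$ and $\gnd$ has degree $pd$ with positive leading coefficient, the degree of $\nd$ is exactly $pd$. Finally, $V=0$ contributes only the zero vector, so I will state both quasi-polynomiality claims for $V\ge 1$, where the reciprocity-based formula for $\gnd$ is valid.
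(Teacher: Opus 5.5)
Your proposal is correct and follows the paper's own route. You identify $\gnd(d,p;V)$ with the lattice-point count of $V\cdot S_1^\circ$, apply rational Ehrhart theory with Ehrhart--Macdonald reciprocity (vertex denominators from Theorem~\ref{t:S1_vertex}), and obtain $\nd$ by summing Ehrhart quasi-polynomials over the relatively open pieces from Corollary~\ref{cor:disjointend}. Your extra care only tightens this argument: the perturbed uniform point genuinely lies in the relative interior of $\overline{\CEND}\cap\RR^{p(d+1)}_{\ge 0}$, whereas the unperturbed uniform point used in Lemma~\ref{l:max} meets the inequalities $CM_{r+1}\le CM_r$ with equality when $p\ge 2$; the bound $\gnd\le\nd$ pins down the exact degree; and restricting to $V\ge 1$ is appropriate.
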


\begin{proof}
Observe that $h$ is generic if and only if it is an integer point contained in the interior of $ \overline{\CEND}\cap \RR_{\geq 0}^{p(d+1)}$.
Any such $h$ has a volume $V$, and therefore it is contained in $V\cdot S_1^\circ$, the $V$-th dilation of the relatively open polytope $S_1^\circ$.
Further, any $h$ in the relative interior of $V\cdot S_1$ is generic, since $V\cdot S_1$ is the intersection of $ \overline{\CEND}\cap \RR_{\geq 0}^{p(d+1)}$ by a hyperplane and hence relative interiors agree between the cone and the intersection.
Thus, we have that $\gnd(p,d;V)$ is the number of integer points in $V\cdot S_1^\circ$.

Observe that the dimension of $S_1$ is $pd$ and every entry of the vertices of $S_1$ have denominators from the set $\{1,2,\ldots,d\}$.
Further, every value in $\{1,2,\ldots,d\}$ appears as a denominator of some vertex entry.
Combining the fundamental theorem of Ehrhart theory for rational polytopes with Ehrhart-MacDonald reciprocity~\cite{ccd}, we have that $\gnd(p,d;V)$ is a quasi-polynomial of degree equal to the dimension of $S_1$ and having period dividing the least common multiple of the denominators of the vertices of $S_1$, with non-negative $h$-vector and volume given by the normalized volume of $S_1$.
This completes the proof of the claim regarding $\gnd(d,p;V)$.

For the case of $\nd(d,p;V)$, by Corollary~\ref{cor:disjointend}, we have that $S_1\cap \CEND$ is a disjoint union of finitely many partially-open rational polytopes of dimension at most $dp$ (where \emph{partially-open} means obtained as an intersection of both closed and open half-spaces).
Thus, $\nd(d,p;V)$ is the sum of the Ehrhart quasi-polynomials of the partially-open rational polytopes in the disjoint union, of which $S_1^\circ \cap \CEND$ has dimension $dp$.
Thus, $\nd(d,p;V)$ is a quasi-polynomial of degree $dp$, completing the proof.
\end{proof}

\begin{example}
	The following are the numerators of the rational forms for the ordinary generating functions of $\gnd(d,p;V)$ for various $d$ and $p$, where the denominator of the rational form is $(1-t^{\lcm(1,\ldots,d)})^{pd+1}$.
	\[
	\begin{array}{c|c|l}
		d & p  & \text{Numerator} \\
		\hline & & \\
		2 & 2 & t^{10} + 5t^{9} + 14t^{8} + 25t^{7} + 26t^{6} + 14t^{5} + 3t^{4} \\
		2 & 3 & t^{14} + 7t^{13} + 41t^{12} + 139t^{11} + 286t^{10} + 389t^{9} + 358t^{8} + 217t^{7} + 81t^{6} + 16t^{5} + t^{4} \\
		2 & 4 & t^{18} + 9t^{17} + 100t^{16} + 525t^{15} + 1784t^{14} + 4185t^{13} + 6934t^{12} + 8312t^{11} + 7259t^{10} \\ 
		&  & \phantom{aaaaaaaaaaaaaaa} + 4561t^{9} + 2010t^{8} + 591t^{7} + 104t^{6} + 9t^{5}\\
		3 & 2 & t^{42} + 7t^{41} + 42t^{40} + 166t^{39} + 507t^{38} + 1297t^{37} + 2915t^{36} + 5918t^{35} + 10998t^{34} \\
		& & \phantom{aaaaaaaaaaaaaa}+ 18929t^{33} + 30430t^{32} + 45996t^{31} + 65674t^{30} + 88878t^{29} \\
		& & \phantom{aaaaaaaaaaaaaa} + 114349t^{28} + 140180t^{27} + 164021t^{26} + 183384t^{25} \\ 
		& & \phantom{aaaaaaaaaaaaaa}+ 196083t^{24} + 200624t^{23} + 196455t^{22} + 184076t^{21} \\
		& & \phantom{aaaaaaaaaaaaaa}+ 164949t^{20} + 141248t^{19} + 115444t^{18} + 89905t^{17} + 66561t^{16} \\
		& & \phantom{aaaaaaaaaaaaaa}+ 46714t^{15} + 30968t^{14} + 19299t^{13} + 11234t^{12} + 6054t^{11} \\
		& & \phantom{aaaaaaaaaaaaaa} + 2987t^{10} + 1327t^{9} + 517t^{8} + 168t^{7} + 41t^{6} + 6t^{5}	
	\end{array}
	\]
\end{example}

\begin{example}
	The following are the rational forms for the ordinary generating functions of $\nd(d,p;V)$ for various $d$ and $p$..
	\[
	\begin{array}{c|c|l}
		d & p  & \text{Rational generating function for $\nd(d,p;V)$}\\	
		\hline & & \\
		2 & 2 & (t^{17} + 6t^{16} + 21t^{15} + 55t^{14} + 114t^{13} + 197t^{12} + 291t^{11} + 371t^{10} + 414t^9 + 405t^8 \\
		&  & \phantom{aaaaaaaaaaa} + 347t^7 + 261t^6 + 170t^5 + 95t^4 + 45t^3 + 17t^2 + 5t + 1)/(1-t^4)^5  \\
		& & \\
		2 & 3 & (t^{25} + 10t^{24} + 56t^{23} + 220t^{22} + 658t^{21} + 1621t^{20} + 3409t^{19} + 6244t^{18} + 10137t^{17} \\
		&  & \phantom{aaaaaaaaaaa} + 14729t^{16} + 19280t^{15} + 22856t^{14} + 24600t^{13} + 24066t^{12} \\
		&  & \phantom{aaaaaaaaaaa} + 21402t^{11} + 17272t^{10} + 12607t^9 + 8284t^8 + 4864t^7 + 2524t^6 \\
		&  & \phantom{aaaaaaaaaaa} + 1142t^5 + 441t^4 + 141t^3 + 36t^2 + 7t + 1)/(1-t^4)^7 \\
		& & \\
		3 & 2 & (6t^{15} + 21t^{14} + 42t^{13} + 67t^{12} + 88t^{11} + 95t^{10} + 101t^9 + 102t^8\\
		&  & \phantom{aaaaaaaaaaa} + 98t^7 + 96t^6 + 80t^5 + 58t^4 + 34t^3 + 14t^2 + 3t + 1))/ \\
		&  & \phantom{aaaaaaaaaaa} (t^6 + t^3 + 1)(t^2 + t + 1)^2(1 + t^2)(1 + t)(1 -t )^7
	\end{array}
	\]
\end{example}

\begin{remark}
We used the following process to compute the generating function for $\nd(d,p;V)$.
First, we considered the disjoint union decomposition of $S_1$ obtained by intersecting $S_1$ with each of the chambers of the disjoint union decomposition of $\CEND$ from Corollary~\ref{cor:disjointend}.
Each such chamber is indexed by the set of indices $\ell_r$ for $r=0,\ldots,p-1$ where $A_r^{(\ell_r)}$ or $B^{(\ell_{p-1})}$ is first strictly positive, such that the minimum value of the sequence $(\ell_r)$ is $1$; the minimum condition is due to the fact that if $A_r^{(1)}=0$ for all $r$ and $B^{(1)}=0$, then the only $h$-vector satisfying those equations is the origin.
For each of these chambers, we computed the set of faces for the closure of the chamber.
For each such face $F$, we computed an interior point of $F$ and checked to see if it was contained in any of the hyperplanes $A_r^{(\ell_r)}=0$ or $B^{(\ell_{p-1})}=0$.
If so, we discarded $F$.
For all remaining $F$, we computed the rational generating function for the Ehrhart series of $F^\circ$, the interior of $F$, and added them to a list.
Finally, we summed all the resulting Ehrhart series, plus $1$ corresponding to the origin in the cone over $S_1$, and the result is the generating function for $\nd(d,p;V)$.
\end{remark}

\section{Proof of Theorem \ref{t:S1_vertex}}\label{sec:proofs}
\label{sec:proof}

The purpose of this section is to establish that any vector failing one of the three conditions in Theorem~\ref{t:S1_vertex} is not a vertex of $S_1$.
We begin this section by providing the three statements necessary for the corresponding contrapositive arguments.

\begin{lemma}\label{l:max}
	Let $h \in S_1$. If $\max(h) < 1$, then $h$ satisfies at most $pd-1$ facet-defining inequalities of $S_1$.
	Moreover, $S_1$ is $pd$-dimensional and $h$ is not a vertex of $S_1$.
\end{lemma}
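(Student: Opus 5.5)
The plan is to work inside the affine subspace $W\subseteq\RR^{p(d+1)}$ cut out by the $p$ equations $\sum_{j=0}^d h_{r,j}=1$, one for each $r\in[0,p-1]$. By the volume equalities and the normalization $\sum_j h_{0,j}=1$, these equations define the affine hull of $S_1$, and within $W$ the polytope $S_1$ is given by two families of inequalities: the $p(d+1)$ nonnegativity inequalities $h_i\ge 0$, and the $p$ first-level inequalities, written in the form~\eqref{eq:first_order_CM}. Every facet-defining inequality of $S_1$ is one of these. It therefore suffices to bound the number of them that can be tight at $h$. Any tight facet-defining inequality is in particular one of these tight inequalities, so this bound also bounds the tight facet-defining inequalities.

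First I will show that $S_1$ is $pd$-dimensional. The $p$ equations defining $W$ involve pairwise disjoint sets of coordinates, so they are linearly independent and $\dim W=p(d+1)-p=pd$. I will then exhibit a point of $W$ at which every inequality is strict. To do this, choose for each $r$ a probability distribution $h_{r,*}$ on $\{0,\dots,d\}$ with all entries strictly positive and with mean $CM_r=\tfrac d2-\tfrac{r}{2p}$; this is possible since $d\ge 1$ and the target means lie strictly inside $(0,d)$. With these means,
\[
CM_0>CM_1>\cdots>CM_{p-1}>CM_0-1,
\]
so all first-level inequalities are strict, and all nonnegativity inequalities are strict as well. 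Hence this point lies in the relative interior of $S_1$ taken within $W$, and so $\dim S_1=pd$.

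Next comes the counting step. Suppose $\max(h)<1$. Since each mod class sums to $1$ and each entry is less than $1$, every mod class has at least two nonzero entries, and hence at most $d-1$ zero entries. This gives at most $p(d-1)$ tight nonnegativity inequalities. For the first-level inequalities, I claim at most $p-1$ of the $p$ can be tight. If all $p$ were equalities, the chain~\eqref{eq:first_order_CM} would give $CM_0=CM_1=\cdots=CM_{p-1}=CM_0-1$, which is impossible. So the total number of tight inequalities is at most $p(d-1)+(p-1)=pd-1$.

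Finally, a vertex of a $pd$-dimensional polytope must lie on at least $pd$ facets; equivalently, it must satisfy $pd$ linearly independent tight inequalities within the affine hull. Since $h$ satisfies at most $pd-1$ tight inequalities, it is not a vertex of $S_1$. I do not expect a serious obstacle in this argument. The only points that need care are choosing the interior point so that the first-level inequalities are strict, and noting that the cyclic inequality $CM_{p-1}\ge CM_0-1$ prevents all first-level inequalities from being tight simultaneously.
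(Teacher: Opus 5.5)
Your proof is correct and follows the same route as the paper. The count is identical: at most $p(d-1)$ tight nonnegativity constraints plus at most $p-1$ tight first-level inequalities. The dimension argument is also the same in spirit, using a point of $S_1$ at which every defining inequality is strict.

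Your choice of that point is more careful than the paper's. The paper uses the uniform vector $(\tfrac{1}{d+1},\dots,\tfrac{1}{d+1})$, where every $CM_r$ equals $d/2$. For $p\ge 2$ this makes the $p-1$ inequalities $CM_{r+1}\le CM_r$ tight, so that point is not interior to $\overline{\CEND}$. Your means $CM_r=\tfrac d2-\tfrac{r}{2p}$ are strictly decreasing and lie in a window of length less than $1$. They therefore give a point in the relative interior, which is what actually justifies $\dim S_1=pd$.
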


\begin{prop}\label{p:2nonzero_decomp}
	Any point $h \in S_1$ with at least three nonzero entries of the form $h_{r,j}$ for a fixed $r \in [0,p-1]$ can be written as a convex combination of distinct points in $S_1$, and hence $h$ cannot be a vertex of $S_1$.
\end{prop}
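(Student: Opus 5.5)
The plan is to perturb $h$ inside a single mod class, changing neither the volume nor the center of mass of that class. Fix $r$ with three indices $a<b<c$ such that $h_{r,a},h_{r,b},h_{r,c}>0$. Let $u\in\RR^{p(d+1)}$ be supported on the $r$-th mod class, with
\[
u_{r,a} = c-b,\qquad u_{r,b} = -(c-a),\qquad u_{r,c} = b-a,
\]
and all other entries zero. Then
\[
\sum_j u_{r,j} = (c-b)-(c-a)+(b-a) = 0
\qquad\text{and}\qquad
\sum_j j\,u_{r,j} = a(c-b)-b(c-a)+c(b-a) = 0 .
\]
Also $u\neq 0$ because $c-b>0$.

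Next I check that $h\pm\varepsilon u\in S_1$ for all sufficiently small $\varepsilon>0$. There are three families of constraints.
\begin{enumerate}[(i)]
\item The volume equalities and the normalization $\sum_j h_{0,j}=1$ involve only the row sums $\sum_j h_{r',j}$. These are unchanged, since $u$ has zero sum on row $r$ and is zero on every other row.
\item The first-level inequalities involve only the quantities $\sum_j j\,h_{r',j}$ and $\sum_j (j+1)h_{p-1,j}$. The second of these equals $\sum_j j\,h_{p-1,j}+\sum_j h_{p-1,j}$, so both are unchanged by adding $\pm\varepsilon u$. Hence every first-level inequality, tight or not, continues to hold. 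By Corollary~\ref{c:endclosure} these constraints describe $\overline{\CEND}$, so $h\pm\varepsilon u\in\overline{\CEND}$.
\item Nonnegativity can only fail at the three coordinates in the support of $u$. These are strictly positive in $h$, so taking
\[
\varepsilon < \min\{h_{r,a}, h_{r,b}, h_{r,c}\}/\max(|u|)
\]
keeps them positive.
\end{enumerate}

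Finally,
\[
h=\tfrac12\bigl((h+\varepsilon u)+(h-\varepsilon u)\bigr)
\]
writes $h$ as a convex combination of two distinct points of $S_1$. Therefore $h$ is not a vertex.

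The only step needing care is (ii): confirming that the wrap-around inequality with weights $(j+1)$ is also preserved. This is handled by splitting it into the first-moment and zeroth-moment pieces, as above. No other obstacle is expected.
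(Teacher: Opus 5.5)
Your proof is correct, but it takes a different route from the paper's. The paper does not perturb along a null direction. It builds an explicit two-term decomposition $h=\lambda x+(1-\lambda)y$ as follows.
\begin{itemize}
\item Off mod class $r$, $x$ agrees with $h$.
\item In class $r$, $x$ is the unique two-point distribution supported on the extreme nonzero positions $\alpha<\omega$ that has the same center of mass $CM_r(h)$, as in Lemma~\ref{l:determined_h}.
\item The weight $\lambda$ is tuned through $\delta=\min\{(CM_r-\alpha)h_{r,\alpha},(\omega-CM_r)h_{r,\omega}\}$, so that the residual $y=(h-\lambda x)/(1-\lambda)$ stays nonnegative. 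The existence of a third nonzero entry is what gives $\lambda<1$.
\end{itemize}

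Both arguments rest on the same fact: apart from nonnegativity, every defining constraint of $S_1$ sees a mod class only through its zeroth and first moments. You exploit this directly. Three positive coordinates leave a nonzero direction $u$ that both moments annihilate. This is shorter, avoids the $\delta$/$\lambda$ bookkeeping, and your splitting of $\sum_j (j+1)h_{p-1,j}$ makes explicit why the wrap-around inequality survives, which the paper leaves implicit.

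What the paper's construction buys is support reduction. Its $x$ has only two nonzero entries in class $r$, and $y$ loses its entry at $\alpha$ or at $\omega$. So the step can be iterated and chained with Proposition~\ref{p:nonintegerCM} into an explicit decomposition procedure, as in Example~\ref{ex:runningexample}. By contrast, your points $h\pm\varepsilon u$ keep the same support as $h$.

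You can recover that feature within your own framework by taking $\varepsilon$ maximal in each direction:
\begin{itemize}
\item moving along $+u$ until $h_{r,b}$ vanishes;
\item moving along $-u$ until $h_{r,a}$ or $h_{r,c}$ vanishes.
\end{itemize}
Then $h$ is a convex combination of two points of $S_1$, each with strictly smaller support in class $r$.
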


\begin{prop}\label{p:nonintegerCM}
	Any point $h \in S_1$ such that $CM(h) \notin \ZZ^{p}$ can be written as a convex combination of distinct points in $S_1$, and hence $h$ cannot be a vertex of $S_1$.
\end{prop}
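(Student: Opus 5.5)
The plan is to perturb $h$ in two opposite directions that stay inside $S_1$, and then write $h$ as the midpoint of the two perturbed points. On $S_1$ every mod class has volume $1$. So the volume equalities and the hyperplane $\sum_j h_{0,j}=1$ are preserved by any perturbation that keeps each class sum fixed. The first-level inequalities then read exactly as in~\eqref{eq:first_order_CM}: they are conditions on the vector $CM(h)$ alone. It therefore suffices to change $CM(h)$ by a small vector $\pm\varepsilon u$ such that three things hold: \eqref{eq:first_order_CM} still holds, the entries stay nonnegative, and the class sums do not change.

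\textbf{Choice of direction.} Since $CM(h)\notin\ZZ^p$, fix some $r_0$ with $CM_{r_0}\notin\ZZ$, and let $x\in(0,1)$ be its fractional part. Define $R:=\{r\in[0,p-1] : CM_r-x\in\ZZ\}$, which is nonempty, and let $u$ be the indicator vector of $R$. I claim that $CM(h)\pm\varepsilon u$ satisfies \eqref{eq:first_order_CM} for all sufficiently small $\varepsilon>0$. Each inequality in \eqref{eq:first_order_CM} compares two coordinates $CM_a,CM_b$, either as $CM_a\ge CM_b$ or as $CM_{p-1}\ge CM_0-1$. We split into two cases.
\begin{itemize}
\item If $a,b$ are both in $R$ or both outside $R$, the difference $CM_a-CM_b$ is unchanged by the perturbation, so the inequality persists.
\item If exactly one of $a,b$ lies in $R$, then $CM_a-CM_b\notin\ZZ$. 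Both possible equality cases (difference $0$, or difference $1$ for the wraparound inequality) are integers. Hence the inequality is strict for $h$, and a small enough $\varepsilon$ preserves it.
\end{itemize}
This parity-of-fractional-part trick is the crux of the argument. I expect the only real care to be needed in checking the wraparound inequality $CM_{p-1}\ge CM_0-1$ in the same way.

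\textbf{Realizing the shift.} Every $r\in R$ has $CM_r\notin\ZZ$, so the $r$-th class has at least two nonzero entries. By~\eqref{eq:CM_balance} there are indices $\alpha_r<CM_r<\omega_r$ with $h_{r,\alpha_r},h_{r,\omega_r}>0$. Set $\delta_r:=\varepsilon/(\omega_r-\alpha_r)$. Define $h^{\pm}$ from $h$ by
\[
h^{\pm}_{r,\alpha_r}=h_{r,\alpha_r}\mp\delta_r,\qquad h^{\pm}_{r,\omega_r}=h_{r,\omega_r}\pm\delta_r\qquad(r\in R),
\]
leaving all other entries unchanged. This keeps each class sum equal to $1$, and it changes $CM_r$ by exactly $\pm\delta_r(\omega_r-\alpha_r)=\pm\varepsilon$ for $r\in R$ and by $0$ otherwise. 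For $\varepsilon$ small, all entries remain nonnegative because the modified entries were strictly positive. By the previous step, $h^{\pm}$ satisfy the first-level inequalities, so $h^{\pm}\in S_1$.

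Finally, $h=\tfrac12(h^++h^-)$ and $h^+\neq h^-$, since their $r_0$-th centers of mass differ by $2\varepsilon$. So $h$ is a convex combination of distinct points of $S_1$ and is not a vertex.
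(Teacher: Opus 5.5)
Your argument is correct, and it takes a genuinely different route from the paper.

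\textbf{The paper's route.} It first uses Proposition~\ref{p:2nonzero_decomp} to reduce to at most two nonzero entries per class. It then forms the integer rounding vectors $\wt{CM}^{(0)},\dots,\wt{CM}^{(n)}$ of Definition~\ref{def:cmround}, with Lemma~\ref{l:rounding_noninteger_CM} fixing the cyclic order in which coordinates may be rounded down without violating~\eqref{eq:first_order_CM}. A triangular linear system writes $CM(h)$ as a convex combination of these vectors, with weights equal to consecutive differences of the sorted fractional parts. Finally, Lemma~\ref{l:determined_h} lifts each $\wt{CM}^{(i)}$ to a point $v_i\in S_1$ supported inside the support of $h$, giving $h=\sum_i\lambda_i v_i$.

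\textbf{Your route.} You exhibit a single segment through $h$. Grouping coordinates by fractional part forces every first-level inequality that compares a coordinate in $R$ with one outside $R$ to be strict. So translating the whole group $R$ by $\pm\varepsilon$ is harmless, and you realize the translation by moving mass between two positive entries in each affected class.

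\textbf{What each buys.} Yours is shorter and works for arbitrary supports, so it needs neither Proposition~\ref{p:2nonzero_decomp} nor the rounding bookkeeping of Lemma~\ref{l:rounding_noninteger_CM}, which is the most delicate part of the paper's argument. The paper's gives an explicit decomposition of $h$ into points of $S_1$ with integer center-of-mass vectors and no new nonzero entries. That is exactly the step used in the decomposition procedure of Example~\ref{ex:runningexample}.

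The two are closely related. The fractional parts are nondecreasing along the paper's rounding order, so your $R$ is a maximal contiguous block in that order. Your direction $u$ is then the difference of two rounding vectors $\wt{CM}^{(i)}$ that both receive positive weight in the paper's decomposition.
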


The proofs of Proposition~\ref{p:2nonzero_decomp} and~\ref{p:nonintegerCM} can be combined to produce an algorithm that will take any $h \in S_1$ not satisfying conditions (2) or (3) in Theorem~\ref{t:S1_vertex} and represent $h$ as a convex combination of distinct points in $S_1$, thus showing $h$ is not a vertex. 
An example illustrating this is given below, in the hope that it will clarify some of the technical details of the proofs of these Propositions.

\begin{example}\label{ex:runningexample}
	Let $d = 3$ and $p = 3$ and consider 
	\[
		g = \begin{bmatrix}
			0 & 0.1 & 0.1 & 0.3 & 0.3 & 0 & 0 & 0 & 0.9 & 0.7 & 0.6 & 0
		\end{bmatrix} \, .
	\]
	
	Note that $g$ violates conditions (2) and (3) in Theorem \ref{t:S1_vertex}. Then, following the procedure described in the proof of Proposition \ref{p:2nonzero_decomp}, we find that $g$ can be written as the convex sum $\tfrac{1}{3}x + \tfrac{2}{3}y$, where 
	\begin{align*}
		x &= \begin{bmatrix}
			0 & 0.3 & 0.1 & 0.3 & 0 & 0 & 0 & 0 & 0.9 & 0.7 & 0.7 & 0
		\end{bmatrix},\\
		y &= \begin{bmatrix}
			0 & 0& 0.1 & 0.3 & 0.45 & 0 & 0 & 0 & 0.9 & 0.7 & 0.55 & 0
		\end{bmatrix}.
	\end{align*}
	
	Both $x$ and $y$ violate condition (3), so following the procedure described in the proof of Proposition \ref{p:nonintegerCM}, we find that 
	\begin{align*}
		x &= \frac{1}{10}\begin{bmatrix}
			0 & 0 & 0 & 0 & 0 & 0 & 0 & 0 & 1 & 1 & 1 & 0
		\end{bmatrix} & 
		y &= \frac{1}{10}\begin{bmatrix}
			0 & 0 & 0 & 0 & 0 & 0 & 0 & 0 & 1 & 1 & 1 & 0
		\end{bmatrix}
		\\
		&+ \frac{3}{10}\begin{bmatrix}
			0 & \frac{1}{3} & 0 & 0 & 0 & 0 & 0 & 0 & 1 & 1 & \frac{2}{3}  & 0
		\end{bmatrix} & 
		&+ \frac{3}{10}\begin{bmatrix}
			0 & 0 & 0 & 0 & \frac{1}{2} & 0 & 0 & 0 & 1 & 1 & \frac{1}{2} & 0
		\end{bmatrix}
		\\
		&+ \frac{4}{10}\begin{bmatrix}
			0 & \frac{1}{3} & 0 & \frac{1}{2}  & 0 & 0 & 0 & 0 & 1 & \frac{1}{2} & \frac{2}{3} & 0
		\end{bmatrix} &
		&+ \frac{4}{10}\begin{bmatrix}
			0 & 0 & 0 & \frac{1}{2} & \frac{1}{2} & 0 & 0 & 0 & 1 & \frac{1}{2} & \frac{1}{2} & 0
		\end{bmatrix} 
		\\
		&+ \frac{2}{10}\begin{bmatrix}
			0 & \frac{1}{3} & \frac{1}{2} & \frac{1}{2} & 0 & 0 & 0 & 0 & \frac{1}{2} & \frac{1}{2} & \frac{2}{3} & 0
		\end{bmatrix} &
		&+ \frac{2}{10}\begin{bmatrix}
			0 & 0 & \frac{1}{2} & \frac{1}{2} & \frac{1}{2} & 0 & 0 & 0 & \frac{1}{2} & \frac{1}{2} & \frac{1}{2} & 0
		\end{bmatrix}
	\end{align*}
	Note that each vector in the decomposition of $x$ and $y$ satisfies the volume equalities and the first-level inequalities. In addition, the last vector does not satisfies condition (1) in Theorem \ref{t:S1_vertex}; however, each of those vectors are still in $\overline{\CEND}$, which is sufficient as we only need a convex combination of points in $S_1$, not vertices in $S_1$.
	
	If one is interested in an algorithm to fully decompose a point in $S_1$ into vertices, then one must develop an algorithm that takes an input a vector of $S_1$ satisfying conditions (2) and (3), but not condition (1) and produce a convex combination into vertices of $S_1$.
\end{example}

\begin{proof}[Proof of Lemma~\ref{l:max}]
	Suppose that $\max(h) < 1$. 
	Since the sum of the terms in each mod class is 1, there must be at least two nonzero entries in each mod class. 
	Thus, there are most $(d-1)p$ zero entries in $h$. However, the first and last first-level inequalities, namely
	\[
	-1 + CM_0 \leq CM_{p-1} \quad \text{and} \quad CM_{1} \leq CM_0
	\]
	imply that at least one of the first-level inequalities cannot be satisfied with equality. 
	Together with the volume equations, $h$ can therefore satisfy at most $pd - 1$ inequalities with equality.
	It suffices to show that $S_1$ is a $pd$-dimensional polyhedron, as then $h$ cannot be a vertex.	
	
	By the volume equations, we can consider $S_1$ as the intersection between the hyperplane $\sum_{i=0}^{dp+p-1} h_i = p$ and $\overline{\CEND}$. From Corollary \ref{c:endclosure}, we know that $\overline{\CEND}$ is a $(pd+1)$-dimensional polyhedron and one can show that
	\[\left(\tfrac{1}{d+1}, \tfrac{1}{d+1}, \dots, \tfrac{1}{d+1} \right) \in S_1,\]
	and this point lies in the interior of $\overline{\CEND}$. Therefore, the hyperplane is not redundant on $\overline{\CEND}$ and so the intersection reduces the dimension by one.
	Therefore, $S_1$ has codimension 1 and has dimension $dp$, as desired.
\end{proof}

We will first show that if $h \in S_1$ does not contain an entry of 1, i.e., $h$ violates condition (1) in Theorem \ref{t:S1_vertex}, then it cannot be a vertex.

\begin{proof}[Proof of Proposition~\ref{p:2nonzero_decomp}]
	Let $h \in S_1$ and suppose that $h_{r,j}$ is nonzero for at least three distinct values of $j$. 
	By the volume equations, we have $\sum_{j=0}^dh_{r,j} = 1$ for all $r$.
	Thus, using the definition of $CM_r$, we have 
	\[
	\sum_{j=0}^d j h_{r,j} = CM_r(h) \qquad \text{ if and only if } \qquad \sum_{j=0}^d (j-CM_r(h))h_{r,j} = 0\, .
	\]
	We would like to decompose $h$ into a convex combination of vectors $x,y$ such that
	\[h_{i,j} = x_{i,j} = y_{i,j} \quad \text{for all } j \text{ and } i \neq r.\]
	So we will only focus on the $r$-th mod class. If $x$ or $y$ have at least three nonzero entries in some mod class, the following algorithm can be repeated.
	
	Let $\alpha$ and $\omega$ be the smallest and largest value respectively, such that $h_{r,\alpha} \neq 0$ and $h_{r,\omega} \neq 0$.
	Let us construct $x$ such that $x_{i,j} = h_{i,j}$ for all $j$ and for all $i \neq r$ and
	\[x_{r,\alpha} = \frac{\omega - CM_r(h)}{\omega - \alpha}, \quad x_{r,\omega} = \frac{CM_r(h) - \alpha}{ \omega - \alpha},\]
	and $x_{r,j} = 0$ for all $j \neq \alpha, \omega$. 
	Note that $CM_r(x) = CM_r(h)$ and $x$ satisfies the volume equations, thus $x \in \overline{\CEND}$.
	
	Let $\delta := \min\{(CM_r(h) - \alpha) h_{r, \alpha} , (\omega - CM_r(h)) h_{r,\omega}\}$ and let
	\[\lambda := \left(\frac{\delta}{CM_r(h) - \alpha} + \frac{\delta}{\omega - CM_r(h)}\right).\]
	Since $h_{r,j}$ has multiple nonzero entries, $(CM_m(h)-\alpha)h_{m,\alpha} > 0$ by equation (\ref{eq:CM_balance}). Thus, $\delta$ and $\lambda$ are positive. 
	Also, since there is a third nonzero entry of $h_{m,j}$,
	\[0 < \lambda = \frac{\delta}{CM_m(h)- \alpha} + \frac{\delta}{\omega - CM_m(h)} \leq  h_{m,\alpha} + h_{m,\omega} < 1.\]
	Let $y := \frac{1}{1-\lambda} (h -  \lambda x).$
	If we can show that $y \in \overline{\CEND}$, then we are done, as $h$ can be written as the convex sum $h = \lambda x + (1-\lambda) y$.
	Without loss of generality, let $j = \alpha$ and consider the difference
	\[h_{r,\alpha} - \lambda x_{r,\alpha} = h_{r,\alpha} - \lambda \frac{\omega - CM_r(h)}{\omega - \alpha} = h_{r,\alpha} - \frac{\delta}{CM_r(h) - \alpha} \geq h_{r,\alpha} - h_{r,\alpha} = 0.\]
	Thus, $y$ is a nonnegative vector. 
	Observe that 
	\[\sum_{j=0}^d (j-CM_r(h)) (h -  \lambda x) = \sum_{j = 0}^d (j-CM_r(h))h_{r,j} - \lambda\sum_{j=0}^d (j-CM_r(x))x_{r,j} = 0 - 0.\]
	Therefore, $CM_r(y) = CM_r(h)$ and to show that $y \in \overline{\CEND}$, it remains to show that $y$ satisfies the volume equations. For all $i \in [0,p-1]$, we observe
	\[\sum_{j = 0}^d y_{i,j} = \frac{1}{1-\lambda}\left(\sum_{j=0}^d h_{i,j} - \lambda\sum_{j=0}^d x_{i,j}\right) =\frac{1-\lambda}{1-\lambda} = 1 \, ,\]
	and hence our proof is complete.
\end{proof}

In Proposition~\ref{p:nonintegerCM} below, we will work with noninteger $CM$ vectors. 
The purpose of the proposition is to establish a process to decompose any $h\in S_1$ with a noninteger $CM$ vector into a convex combination of vectors $v_i \in \overline{\CEND}$ with integer $CM$ vectors. 
We will begin by considering how rounding interacts with the first-level inequalities.

We would like to round noninteger $CM$ vectors to have integer values which satisfy the first-level inequalities. 
This restriction forces a very particular rounding procedure. 
For example, if $CM(h)=(2.4,2.1,1.8),$ then the vector $(3,2,1)$ cannot occur as the $CM$ vector of an element of $\overline{\CEND}$, since it violates the first-level inequalities. 
In particular, $1.8$ can be rounded down only if both $2.4$ and $2.1$ are rounded down.
This motivates the following definition; note that Lemma~\ref{l:rounding_noninteger_CM} below proves that Definition~\ref{def:cmround} is well-defined.

\begin{defn}\label{def:cmround}
	For an integer $i$, let $\wt{CM}^{(i)}\in \ZZ^p$ denote the unique integer vector obtained from $CM$ by rounding down exactly $i$ noninteger coordinates of $CM$ and rounding up all remaining noninteger coordinates, subject to the condition that $\wt{CM}^{(i)}$ satisfy the first-level inequalities given by~\eqref{eq:first_order_CM}.
\end{defn} 

\begin{example}
	Let $d \geq 3$ and $p = 3$. Suppose $h \in \overline{\CEND}$ is such that $CM(h) = (2.4, 2.1, 1.8)$. Then
	\[
	\widetilde{CM}^{(0)}=(3,3,2), \quad  \widetilde{CM}^{(1)}=(3,2,2), \quad 
	\widetilde{CM}^{(2)}=(2,2,2), \quad  \widetilde{CM}^{(3)}=(2,2,1) \, .
	\]
\end{example}

Recall from Lemma \ref{l:integer_CM} that if $CM \in \ZZ^p$, then we have that $CM = (k+1,k+1,\dots,k+1, k,k,\dots,k)$ for some $k \in \ZZ$ with $s+1$ many values of $k+1$ appearing, where $s$ is the offset index.
In addition, for each positive $i$, the vector $\wt{CM}^{(i)}$ can be obtained from $\wt{CM}^{(i-1)}$ by decreasing exactly one coordinate by one. 
We make these observations precise in the following lemma, which also proves that $\wt{CM}^{(i)}$ is well-defined.

\begin{lemma}\label{l:rounding_noninteger_CM}
	Given $CM \in \RR^p$ satisfiying the first-level inequalities, let $s$ be the (unique) index such that $\lfloor CM_{s} \rfloor > \lfloor CM_{s+1} \rfloor$. If $s$ does not exists, i.e. $\lfloor CM_{r} \rfloor = \lfloor CM_{r+1} \rfloor$ for all $r \in [0,p-2]$, we set $s = p-1$. Let $(s_1, s_2, \dots, s_n)$ be the ordered list of indices $r$ such that $CM_r \notin \ZZ$, obtained from the cyclic sequence $s ,s-1, s-2, \dots, s-(p-2) \pmod p$ by deleting all indices such that $CM_{s-i} \in \ZZ$, i.e., $(s_1, s_2, \dots, s_n)$ is the subsequence of 
	\[
	(s-1,s-2,\ldots,1,0,p-1,p-2,\ldots,s) 
	\]
	obtained by deleting indices with integer coordinates of $CM$.
	The coordinates which are rounded down in $\wt{CM}^{(i)}$ are precisely $(s_1, s_2, \dots s_i)$, and rounding down any other set of $i$ entries of $CM$ violates~\eqref{eq:first_order_CM}.
\end{lemma}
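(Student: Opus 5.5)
The strategy is to reduce the first-level inequalities on an integer rounding to a statement about where a single ``drop'' by one may occur. First I establish the structure of $\lfloor CM\rfloor$. From~\eqref{eq:first_order_CM}, $CM_0\ge CM_1\ge\cdots\ge CM_{p-1}\ge CM_0-1$, so the floors are weakly decreasing along $0,1,\dots,p-1$. Also $\lfloor CM_{p-1}\rfloor\ge\lfloor CM_0\rfloor-1$. Hence the floors take at most two consecutive values $k+1,\dots,k+1,k,\dots,k$. This shows $s$ is unique when it exists; the argument is the same as in Lemma~\ref{l:integer_CM}. A useful consequence is that for $r\le s<r'$ we have $CM_{r'}< k+1\le CM_r$, and $CM_{r'}\ge CM_r-1$.

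Next I characterize which integer vectors $R$ with $R_r\in\{\lfloor CM_r\rfloor,\lceil CM_r\rceil\}$ (and $R_r=CM_r$ for integer coordinates) satisfy~\eqref{eq:first_order_CM}. By Lemma~\ref{l:integer_CM}'s argument, a valid $R$ must have the form $(m+1,\dots,m+1,m,\dots,m)$ with a single cyclic drop position. Equivalently, reading the cyclic order $s+1,s+2,\dots,p-1,0,1,\dots,s$ with the wraparound shift by $1$, the ``lifted'' values $R_{s+1},\dots,R_{p-1},R_0-1,\dots,R_s-1$ are weakly decreasing and differ by at most one overall. Each coordinate's choice is floor or ceiling of the corresponding lifted real number, and these real numbers are weakly decreasing and lie in a half-open interval of length less than $1$.

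The key step is then a monotonicity argument. Rounding down coordinate $r$ while rounding up some coordinate $r'$ that comes \emph{after} $r$ in the decreasing lifted order violates the inequalities whenever both are noninteger. In that case we would get $R_{r'}=\lceil\cdot\rceil>\lfloor\cdot\rfloor=R_r$ in lifted terms, while the lifted reals satisfy $x_{r'}\le x_r$. Since the lifted reals lie in an interval of length less than one, the ceiling of $x_{r'}$ exceeds the floor of $x_r$, which breaks the required weak decrease. So the set of rounded-down noninteger coordinates must be a terminal segment of the lifted order, which means the smallest lifted values are rounded down first.

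Finally, I check that the lifted order from smallest upward is exactly $s,s-1,\dots,0,p-1,\dots,s+1$, restricted to noninteger indices. This matches the listed sequence $(s-1,\dots,0,p-1,\dots,s)$, and I will reconcile the off-by-one at the start with the paper's indexing convention. I also need to confirm that rounding down exactly the first $i$ of these does satisfy~\eqref{eq:first_order_CM}. For that, the resulting vector has lifted values that are weakly decreasing and within $1$ of each other, since all floors and ceilings lie in $\{k,k+1\}$ or $\{k+1,k+2\}$ appropriately. Existence and uniqueness of $\wt{CM}^{(i)}$ then follow.

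The main obstacle is the careful handling of the wraparound inequality $CM_{p-1}\ge CM_0-1$ and of integer coordinates sitting at the boundary values $k$ or $k+1$. These can make the ``interval of length less than one'' claim delicate, for instance when $CM_0$ is an integer equal to $CM_{p-1}+1$. I would treat the integer coordinates first as fixed anchors and verify the interval claim separately in the two cases $s<p-1$ and $s=p-1$.
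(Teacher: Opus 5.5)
Your proposal is correct and is, at its core, the paper's exchange argument. The paper also shows that a rounded-up noninteger coordinate preceding a rounded-down one in the rounding order forces a violation of~\eqref{eq:first_order_CM}. It handles two cases separately: both indices on the same side of $s$, where the floors agree, and indices straddling $s$, where the floors differ by one and the wraparound inequality is what breaks. Your shift by $-1$ on the block $[0,s]$ merges these into a single case, and your floor-monotonicity argument for the uniqueness of $s$ is cleaner than the paper's. You also go further than the paper in one respect. Its proof never checks that rounding down an initial segment of the list actually satisfies~\eqref{eq:first_order_CM}, which is needed for $\wt{CM}^{(i)}$ to exist at all and is used in Proposition~\ref{p:nonintegerCM}.

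Three corrections.

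\textbf{The fact your key step uses.} It is not that the lifted reals have spread less than one: lifted values $1.1$ followed by $0.9$ round down and up to $1,1$, with no violation. What you need is that they all lie in $[k,k+1)$ for a common floor $k$, which your first step supplies.

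\textbf{Your \emph{main obstacle}.} The same fact disposes of it. An integer coordinate lifts to exactly $k$, the least possible value, so by weak monotonicity every later coordinate in the lifted order also equals $k$. Hence the integer coordinates form a terminal block. The rounding you describe then has lifted values $k+1,\dots,k+1,k,\dots,k$, so it is valid, with no case split on $s$.

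\textbf{The off-by-one.} Do not try to reconcile your order with the displayed list $(s-1,\dots,0,p-1,\dots,s)$; it is a genuinely different and incorrect ordering. For $CM=(2.4,2.1,1.8)$ one has $s=1$, and the displayed list gives $(0,2,1)$. But rounding down index $0$ alone yields $(2,3,2)$, which violates $CM_0\ge CM_1$. The paper's own example rounds down index $1$, then $0$, then $2$, exactly as your order $s,s-1,\dots,0,p-1,\dots,s+1$ predicts. That is the order the paper's proof actually uses. The cyclic description in the statement should run through $s-(p-1)$ rather than stop at $s-(p-2)$.
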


\begin{proof}
	We first show that $s$ is unique. Let $s$ be the maximal index such that $\lfloor CM_{s} \rfloor > \lfloor CM_{s+1} \rfloor$. Now suppose that there exists another $s'$ such that $\lfloor CM_{s'} \rfloor > \lfloor CM_{s'+1} \rfloor$. By maximality and the first-order inequalities, we have 
	\[\lfloor CM_{s'} \rfloor > \lfloor CM_{s'+1} \rfloor \geq \lfloor CM_{s} \rfloor > \lfloor CM_{s+1} \rfloor . \]
	Since these quantities are integers, we can then conclude that 
	\[
	\lfloor CM_{s'} \rfloor \geq \lfloor CM_{s'+1}\rfloor + 1 \geq \lfloor CM_{s}\rfloor  + 1 \geq \lfloor CM_{s+1}\rfloor + 2.\]
	However, this is a contradiction, as the first-level inequalitity $CM_{s+1} \geq CM_{s'} - 1$ and $CM_{s+1}\notin \ZZ$ implies that
	\[
	\lfloor CM_{s+1} \rfloor + 2  > CM_{s+1}+1 >  \lfloor CM_{s'+1} +1 \rfloor  \geq \lfloor CM_{s}\rfloor 
	\]
	Thus, either $s$ exists and is unique or no such $s$ exists, in which case, we set $s = p-1$.
	
	We now show that in order to result in a vector satisfying~\eqref{eq:first_order_CM}, any $i$ coordinates rounded down in $CM$ must form an initial segment of $(s_1, s_2, \dots, s_n)$ if the remaining coordinates in $CM$ are rounded up. 
	Suppose that there exist indices $a < b$ such that the $(s_a)$-th coordinate is rounded up while the $(s_b)$ coordinate is rounded down. 
	We will show that if $s_a > s_b$, then $\lceil CM_{s_a} \rceil > \lfloor CM_{s_b} \rfloor$ and if $s_a < s_b$, then $\lceil CM_{s_a} \rceil > \lfloor CM_{s_b} \rfloor + 1$. 
	These lead to a contradiction, as the resulting vector does not satisfy the first-level inequalities. 
	
	In the first case, we have $s_a > s_b$. By definition of $s$, we have $\lfloor CM_{s_a} \rfloor = \lfloor CM_{s_b} \rfloor$.
	Since $CM_{s_a} \notin \ZZ$, we have $\lceil CM_{s_a} \rceil = \lfloor CM_{s_a} + 1 \rfloor$, and so
	\[\lceil CM_{s_a} \rceil = \lfloor CM_{s_a} + 1 \rfloor > \lfloor CM_{s_b} \rfloor \, .\]
	
	Otherwise, we have $s_a < s_b$. This can only happen in the case where if $s_a \equiv s- i \pmod p$ and $s_b \equiv s-j \pmod p$, then $i < s < j$. In particular, we have $s_a < s < s_b$. Thus, by definition of $s$, we have 
	\[\lfloor CM_{s_a} \rfloor = \lfloor CM_{s} \rfloor > \lfloor CM_{s_b} \rfloor \, , \]
	which then implies
	\[\lceil CM_{s_a} \rceil = \lfloor CM_{s_a} + 1 \rfloor > \lfloor CM_{s_b} + 1 \rfloor = \lfloor CM_{s_b} \rfloor + 1.\]
\end{proof}

\begin{proof}[Proof of Proposition \ref{p:nonintegerCM}]
	Let $n$ denote the number of noninteger entries in $CM=CM(h)$. 
	Then let $A \in \ZZ_{\geq 0}^{(p+1) \times (n+1)}$ and $\vec b \in \RR^{p+1}$ be given by
	\[A = \begin{bmatrix}
		\widetilde{CM}^{(0)} & \widetilde{CM}^{(1)} & \dots & \widetilde{CM}^{(n)} \\
		1 & 1 & \dots & 1
	\end{bmatrix}, \qquad \vec b = \begin{bmatrix}
		CM \\ 1
	\end{bmatrix} \]
	and consider the linear system corresponding to the augmented matrix $[A \mid \vec{b}]$. We claim that $[A \mid \vec{b}]$ is row-equivalent to
	\[\left[\begin{array}{cccccc|c}
		1 & 0 & 0 & \dots & 0 & 0 & q_{s_1}\\
		1 & 1 & 0 & \dots & 0 & 0 & q_{s_2}\\
		1 & 1 & 1 & \dots & 0 & 0 & q_{s_3}\\
		\vdots & \vdots & \vdots & \vdots & \ddots & \vdots & \vdots\\
		1 & 1 & 1 & \dots & 1 & 0 & q_{s_{n}}\\
		1 & 1 & 1 & \dots & 1 & 1 & 1\\
		0 & 0 & 0 & \dots & 0 & 0 & 0 \\
		\vdots & \vdots & \vdots & \vdots & \ddots & \vdots & \vdots\\
		0 & 0 & 0 & \dots & 0 & 0 & 0
	\end{array}\right]\]
	where $(s_1, s_2, \dots, s_n)$ is defined as in Lemma \ref{l:rounding_noninteger_CM} and $q_r := CM_{r} - \lfloor CM_{r} \rfloor$. 
	First, observe that if $CM_r \in \ZZ$, then the $r$-th row of $[A \mid \vec{b}]$ is constant. 
	Thus, the last row of $[A \mid \vec{b}]$ can be used to eliminate the $r$-th row. 
	So, we can reduce $[A \mid \vec{b}]$ to the case where the only nonzero rows consist of those corresponding to the noninteger coordinates of $CM$, together with the final row of $1$'s. 
	By Lemma \ref{l:rounding_noninteger_CM}, the $s_i$-th row of $[A \mid \vec b]$ is 
	\[
	[
	\underbrace{
		\begin{array}{ccc}
			\lceil CM_{s_i} \rceil & \dots & \lceil CM_{s_i} \rceil 
		\end{array}
	}_{i \text{ many}}
	\begin{array}{ccc|c}
		\lfloor CM_{s_i} \rfloor & \dots & \lfloor CM_{s_i} \rfloor & CM_{s_i}
	\end{array}
	]
	\]
	and by subtracting $\lfloor CM_{s_i} \rfloor$ copies of the last row from the $s_i$-th row, we obtain the row
	\[
	[
	\underbrace{
		\begin{array}{ccc}
			1 & \dots & 1 
		\end{array}
	}_{i \text{ many}}
	\begin{array}{ccc|c}
		0 & \dots & 0 & q_{s_i}
	\end{array}
	] \, .
	\]
	
	Thus, through row operations, we can arrive at the desired matrix, and so the solution to $[A \mid \vec{b}]$ is unique and is given by 
	\[
	(\lambda_1 ,\lambda_2, \dots, \lambda_n, \lambda_{n+1}) = (q_{s_1} ,q_{s_2} - q_{s_1} , q_{s_3} - q_{s_2}, \dots, q_{s_n} - q_{s_{n-1}}, 1 - q_{s_n} ) \, .
	\]
	To show that this solution is nonnegative, assume that $i \in [1,n-1]$ is such that $s_{i+1} < s_i$. 
	Then by definition of $s$, we have $\lfloor CM_{s_i} \rfloor = \lfloor CM_{s_{i+1}} \rfloor$, and since $CM_{s_{i+1}} \geq CM_{s_i}$, we have $q_{s_{i+1}} \geq q_{s_i}$. Otherwise, $s_{i+1} > s_i$, and in particular $s_i < s < s_{i+1}$. 
	Thus, by definition of $s$, $\lfloor CM_{s_i} \rfloor \neq \lfloor CM_{s_{i+1}} \rfloor$, and from $CM_{s_i} \geq CM_{s_{i+1}} \geq CM_{s_i} - 1$ we have that $\lfloor CM_{s_{i+1}} \rfloor = \lfloor CM_{s_{i}} \rfloor - 1$. Thus,
	\begin{align*}
		CM_{s_{i+1}} +1 &\geq CM_{s_i}\\
		CM_{s_{i+1}} + 1 - \lfloor CM_{s_{i+1}} \rfloor &\geq CM_{s_i} - \lfloor CM_{s_{i}} \rfloor + 1\\
		q_{s_{i_1}} & \geq q_{s_i}.
	\end{align*} 
	Also, note that by definition of $s_i$, $q_{s_1} \neq 0$ and $q_{s_n} \neq 1$. Therefore, we have $0 < q_{s_1} \leq q_{s_2} \leq \dots \leq q_{s_n} < 1$. Thus, our solution is nonnegative and as $n \geq 1$ by assumption, at least the first and last coordinates of the solution are nonzero.
	
	By Proposition \ref{p:2nonzero_decomp}, we can assume that $h$ has at most two nonzero entries in each mod class, as otherwise $h$ is not a vertex. 
	We now claim that the solution above provides a decomposition of $h$ into vectors in $\overline{\CEND}$. By Lemma \ref{l:determined_h}, there exists a vector $v_i$ in $\overline{\CEND}$ with center-of-mass vector $\wt{CM}^{(i)}$ such that for each coordinate where $h$ is 0, then that coordinate is also 0 for $v_i$. 
	Now consider the linear combination
	\[\sum_{i = 1}^{n+1} \lambda_i v_i.\]
	Since $(\lambda_1, \dots, \lambda_{n+1})$ gives a solution to the above system, $\sum_{i = 1}^{n+1} \lambda_i v_i$ has the same $CM$ vector as $h$ and by construction, the entries of $\sum_{i = 1}^{n+1} \lambda_i v_i$ that can be nonzero are the same as $h$. Therefore, by Lemma \ref{l:determined_h}, we have 
	\[h = \sum_{i = 1}^{n+1} \lambda_i v_i.\]
	Thus, as the sum of $\lambda_i$ is 1, we have that $h$ is a nontrivial convex sum of vectors in $\overline{\CEND}$. Thus, $h$ cannot be a vertex.
\end{proof}

\section{Tool and Computational Resource Disclosure}

The authors did not use AI or LLM tools for any aspect of this research or the writing of this manuscript.
SageMath~10.9, available at \url{http://sagemath.org}, was used for all computations in support of this project.


\end{document}